\numberwithin{equation}{section}
\providecommand{\U}[1]{\protect\rule{.1in}{.1in}}
\newtheorem{theorem}{Theorem}[section]
\newtheorem{definition}{Definition}[section]
\newtheorem{proposition}[theorem]{Proposition}
\newtheorem{lemma}[theorem]{Lemma}
\newtheorem{remark}[]{Remark}
\newtheorem{example}[theorem]{Example}
\newtheorem{examples}[theorem]{Examples}
\newtheorem{foo}[theorem]{Remarks}
\newcommand{\norm}[1]{{\left\lVert{#1}\right\rVert}}
\newcommand{\Deltah}{\Delta_{\mathbb{H}}}
\newcommand{\mathleft}{\@fleqntrue\@mathmargin0pt}
\newcommand{\mathcenter}{\@fleqnfalse}
\newcommand{\disth}{d_{\mathbb{H}}}
\def\vint{\mathop{\mathchoice%
          {\setbox0\hbox{$\displaystyle\intop$}\kern 0.22\wd0%
           \vcenter{\hrule width 0.6\wd0}\kern -0.82\wd0}%
          {\setbox0\hbox{$\textstyle\intop$}\kern 0.2\wd0%
           \vcenter{\hrule width 0.6\wd0}\kern -0.8\wd0}%
          {\setbox0\hbox{$\scriptstyle\intop$}\kern 0.2\wd0%
           \vcenter{\hrule width 0.6\wd0}\kern -0.8\wd0}%
          {\setbox0\hbox{$\scriptscriptstyle\intop$}\kern 0.2\wd0%
           \vcenter{\hrule width 0.6\wd0}\kern -0.8\wd0}}%
          \mathopen{}\int}
\newcommand{\hn}{\mathbb{H}^n}
\newcommand{\bn}{\mathbb{B}^n}
\newcommand{\rn}{\mathbb{R}^n}
\newcommand{\sn}{\mathbb{S}^n}
\renewcommand{\leq}{\leqslant}
\renewcommand{\geq}{\geqslant}
\begin{document}

\title{Symmetry of Solutions to Fractional Semilinear Equations on Hyperbolic Spaces}
\author{Jianxiong Wang}
\address{Jianxiong Wang: Department of Mathematics\\Rutgers University\\Piscataway, NJ 08854, USA.}
\email{jiangxiong.wang@rutgers.edu}
\date{}
\begin{abstract}
We study a semilinear equation involving the fractional Laplacian on the hyperbolic space $\mathbb{H}^n$. Unlike in conformally compact Einstein manifolds, the fractional Laplacian on $\mathbb{H}^n$ does not enjoy conformal covariance. By employing Helgason-Fourier analysis, we explicitly derive the Green's function of the fractional Laplacian on $\mathbb{H}^n$ as well as its asymptotic behaviors. We then apply a direct method of moving planes to the integral form of the equation, and show that nonnegative weak solutions are symmetric. In addition, we extend several maximum principles to hyperbolic space.
\end{abstract}
\keywords{Moving plane method; Fractional Laplacian; Symmetry of solutions; Helgason-Fourier transform; Hyperbolic spaces.}
\setlength\parindent{0pt}
\maketitle

\section{Introduction}

The fractional Laplacians arise in various fields of mathematics and physics, such as probability, finance, physics, and fluid dynamics. For instance, they arise naturally in the study of stochastic processes with jumps, and more precisely in L\'evy processes \cite{Levy}, which extend the concept of Brownian motion. Fractional order operators also appear in the study of conformal geometry and partial differential equations. In $\rn$, the fractional Laplacian is a nonlocal pseudo-differential operator, assuming the form
\[
(-\Delta)^s u(x) = c_{n,s} \, \text{P.V.} \int_{\rn} \frac{u(x) - u(y)}{|x-y|^{n+2s}} \, dy, \quad 0 < s < 1,
\]
where P.V. stands for the Cauchy principal value and $c_{n,s}$ is a normalization constant. Equivalently, $(-\Delta)^{\frac{\alpha}{2}}$ can be defined in terms of Fourier transform:
\begin{equation*}
    (-\Delta)^{s}u(x)=\mathcal{F}^{-1}[|\xi|^{2s} \mathcal{F}{u}(\xi)](x).
\end{equation*}

The non-locality of the fractional Laplacian makes it usually difficult to investigate. To circumvent this difficulty, Caffarelli and Silvestre \cite{CS07} introduced the extension method that reduced this nonlocal problem into a local one in higher dimensions. For a function $u : \rn \to \mathbb{R}$, we construct the extension $U : \rn \times [0, +\infty) \to \mathbb{R}, U = U (x, y)$, as the solution of the equation
\begin{equation}
\begin{cases}
\operatorname{div}(y^{1-2s} \nabla U(x,y)) = 0, & (x,y) \in \mathbb{R}^n \times (0,\infty), \\
U(x,0) = u(x), & x \in \mathbb{R}^n.
\end{cases}
\end{equation}

The fractional Laplacian is then recovered by:
\begin{equation}
(-\Delta)^s u(x) = - C_s \lim_{y \to 0^+} y^{1-2s} \frac{\partial U}{\partial y}(x,y),
\end{equation}
where $C_s$ is a normalization constant depending on $s$.

The fractional (as well as integer) powers of Laplacian that enjoy the conformal property can also be defined on Riemannian manifolds. Graham and Zworski \cite{GZ03} studied the connection between scattering matrices on conformally compact asymptotically Einstein manifolds and conformally invariant objects on their boundaries at infinity. Let $(X^{n+1}, g^+)$ be a conformally compact Einstein manifold with conformal infinity $(M, [\hat{g}])$. For any defining function $\rho$ of $M$, we can write $\bar{g} = \rho^2 g^+$, which extends to a metric on $\bar{X} = X \cup M$. Given $f \in C^\infty(M)$, consider the generalized eigenvalue problem
\begin{equation*}
  -\Delta_{g^+} u - s(n-s) u = 0 \quad \text{ in } X,
\end{equation*}
with the asymptotic expansion near $M$:
\begin{equation*}
  u = F \rho^{n-s} + G \rho^s, \quad F, G \in C^\infty(\bar{X}), \quad F|_{\rho=0} = f.
\end{equation*}
The scattering operator $S(\lambda)$ is defined by $S(\lambda) F = G$.
In \cite{CG11}, Chang and Gonz\'alez showed that the fractional order operators defined via the scattering operator can be realized as the Dirichlet-to-Neumann map of a degenerate elliptic equation on $X$, generalizing the extension method of Caffarelli-Silvestre to the setting of conformally compact Einstein manifolds. It is also worth mentioning that the fractional GJMS operators on hyperbolic spaces were explicitly calculated by Lu and Yang in \cite{LY23}. They also pointed out in \cite{FLY2} that these fractional GJMS operators are not conformally to the fractional Laplacians on the upper half space $\mathbb{R}^n_+$ nor on the unit ball $B^n$ in $\rn$, which one may expect for the integer cases. 

Although the fractional order operators on general manifolds are rather sophisticated, see e.g. \cites{GZ03,CG11} and the references therein, there have been numerous accomplishments in the past decades when one considers the integer powers of Laplacians, which we will discuss in the following briefly. In the celebrated work of Gidas-Ni-Nirenberg \cite{GNN1}, they considered the following boundary value problem on the ball $B_R(0)\subset{\rn}$.
\begin{equation*}
    \begin{cases}
        -\Delta u = f(u) &\text{ in } B_R(0)\\
         u=0 &\text{ on } \partial B_R(0),
    \end{cases}
\end{equation*}
where $f$ is of class $C^1$. They proved that any positive solution $u$ in $C^2(\overline{B_R(0)})$ is radially symmetric and decreasing.
Their approach is the so-called moving plane method, which was initiated by Alexandrov \cite{Alexandrov} in the 1950s, and further developed by Serrin \cite{serrin}.
Later, Gidas-Ni-Nirenberg \cite{GNN2} studied the equation $-\Delta u = f(u)$ in the entire space $\rn\setminus\{0,\infty\}, n\geq 3$
with two singularities located at the origin and infinity:
\begin{align}
  u(x)\to+\infty \quad &\text{ as } \quad x\to0. \nonumber\\
  |x|^{n-2}u(x)\to+\infty \quad &\text{ as } \quad x\to\infty.
\end{align}
They showed that the solution is radially symmetric about the origin and decreasing.
Subsequently, Caffarelli-Gidas-Spruck \cite{CGS} considered this equation in a punctured ball when $f$ has critical growth.
To be more precise, they require the nonlinearity $f(t)$ to be a locally nondecreasing Lipschitz function with $f(0)=0$, and to satisfy the following condition:
for sufficiently large $t$, the function $t^{-\frac{n+2}{n-2}}f(t)$ is nonincreasing and $f(t)\geq ct^p$ for some $p\geq \frac{n}{n-2}$.
They showed that the solution $u$ is radially symmetric and decreasing.

The moving plane method was later extended to the study of higher order equations on $\mathbb{R}^n$.
The main challenge here is that the moving plane method heavily depends on the maximum principle, which does not always hold for higher order operators.
To overcome this obstacle, Chen, Li and Ou \cites{CLO,ChenLiOu2} developed a powerful moving plane method for integral equations, obtaining the symmetry of solutions for higher order and even fractional order equations. More precisely, they proved that for the following equation.
$$
  (- \Delta )^{\frac{\alpha}{2}} u = u^{\frac{n+\alpha}{n - \alpha}}
$$
on $\mathbb{R}^n$, every positive regular solution (i.e. locally $L^{\frac{2n}{n-\alpha}}$ solutions) $u$
is radially symmetric and decreasing about some point $x_0$ and therefore assumes the form (up to some dilations):
$$
  u(x) = \frac{1}{\left( a + b |x - x_0|^2 \right)^{\frac{n-\alpha}{2}}}.
$$
Their work completely classifies all the critical points of the functional corresponding to the Hardy-Littlewood-Sobolev
inequalities of order $\alpha$, whose sharp constant was previously obtained by Lieb \cite{Lieb1}.

The method of moving planes is a powerful tool for proving symmetry and monotonicity properties of solutions to partial differential equations. It was first introduced by Alexandrov \cite{Alexandrov} in the context of geometric problems and later adapted by Serrin, Gidas, Ni, and Nirenberg \cites{K-P1, K-P2, CGS} to study symmetry properties of solutions to elliptic PDEs in Euclidean spaces. The method has since been extended to various settings, including Riemannian manifolds. In recent years, a direct method of moving planes was developed by Chen, Li and Li \cite{CLL} to deal with fractional Laplacians and other nonlocal operators. This method has been successfully applied to various problems involving fractional Laplacians in $\rn$ and other settings (see, e.g., \cites{ChenWu-ANS, ChenHuMa-ANS, LiY, Liao-ANS, LiuM-ANS, GuoMaZhang-ANS}).

In the present paper, we aim to study positive solutions to the following fractional order equations on $\hn$.
\begin{equation}
(-\Delta_{\hn})^s u = u^p(x), \quad x \in \hn.
\end{equation}
Here $0 < s < 1$ and $p > 1$. We start by reviewing some work in the Euclidean space. In \cite{BCPS}, among other results, the authors considered the properties of the positive solutions for  
\begin{equation*}
  (-\Delta)^s u = u^p, \quad x \in \rn,
\end{equation*}
and first used the above extension method to reduce the nonlocal problem into a local one for $U(x,y)$ in one higher dimensional half space $\rn \times [0, \infty)$, then applied the method of moving planes to show the symmetry of $U(x, y)$ in $x$, and hence derived the non-existence in the subcritical case.

Later, Chen, Li and Li \cite{CLL} developed a direct method of moving planes for the fractional Laplacian in $\rn$, without using the extension method. They established several maximum principles for antisymmetric functions and applied the method of moving planes to obtain symmetry and nonexistence results for positive solutions of semilinear equations involving the fractional Laplacian in $\rn$. 
In \cite{CLZ}, Chen, Li and Zhang introduced another direct method - the method of moving spheres for the fractional Laplacian, which is more powerful than the method of moving planes. The method of moving spheres can be used to capture the solutions directly rather than going through the usual procedure of proving radial symmetry of solutions and then classifying radial solutions. It is worth noticing that both their approaches relies on the Kelvin transform. However, on hyperbolic spaces, the Kelvin transform for the fractional Laplacian is not available due to the lack of conformal invariance. To overcome this difficulty, we will make use of the asymptotic behavior of solutions and the Hardy-Littlewood-Sobolev inequality on hyperbolic spaces to start the moving plane process.

We point out here that the moving plane method on $\mathbb{H}^n$ was first seen in the work of Kumaresan-Prajapat \cites{K-P1,K-P2}, where they established the analog result of Gidas-Ni-Nireneberg type, as well as solved an overdetermined problem on $\mathbb{H}^n$, which is an analogue of the problem on $\mathbb{R}^n$ initiated by Serrin \cite{serrin} (see also \cite{HLZ}, \cite{L-Z2}). 
The hyperbolic space serves as one of the most important models of Riemannian manifolds with constant curvature. For definitions and basic properties of hyperbolic spaces, we refer to Section \ref{sec:prelim}. The axial symmetry of solutions to a class of integral equations on half spaces was  studied by Lu-Zhu in \cite{L-Z}.  More recently, the overdetermined problem of fractional order equations on hyperbolic spaces was studied by Li-Lu-Wang \cite{LLW2}. The concept of the moving plane method in \cites{K-P1,K-P2} was further developed in the work of Almeida-Ge \cite{ADG2} and Almeida-Damascelli-Ge \cite{ADG1},
where they took advantage of the foliation structure of $\mathbb{H}^n$ (for details see Section \ref{sec:prelim}). 
Li, Lu and Yang \cite{LLY} obtained a higher order symmetry result of the Brezis-Nirenberg problem on the hyperbolic spaces \cite{LLY}. Recently. with Li and Lu, the author studied general higher order equations \cite{LLW} by using the moving plane method, and later obtained the classification result using the moving sphere method \cite{LLW25} on the hyperbolic space. For symmetry and existence results of semilinear PDEs of second order on hyperbolic space, see Mancini and Sandeep \cite{ManciniSandeep1}, and \cite{Dutta-Sandepp} on a quasilienar equation involving the $p$-Laplacian on $\hn$.

The definition of the fractional Laplacian on hyperbolic spaces via the singular integral representation was constructed in \cites{BGS15}.
\begin{definition}
Let $n \geq 2$ and $0 < s < 1$. The fractional Laplacian on $\mathbb{H}^n$ is defined by
\begin{equation}
  (-\Delta_{\mathbb{H}^n})^s u(x) = c_{n,s} \, \text{P.V.} \int_{\mathbb{H}^n} (u(x) - u(\xi)) \, \mathcal{K}_{n,s}(d(x,\xi)) \, d\xi
\end{equation}
with the kernel $\mathcal{K}_{n,s}$ given by
\begin{equation}\label{kernel}
  \mathcal{K}_{n,s}(\rho) = C_1 \left( -\frac{\partial_\rho}{\sinh \rho} \right)^{\frac{n-1}{2}} \left( \rho^{-\frac{1+2s}{2}} K_{\frac{1+2s}{2}} \left( \frac{n-1}{2} \rho \right) \right)
\end{equation}
when $n \geq 3$ is odd and
\[
\mathcal{K}_{n,s}(\rho) = C_1 \int_{\rho}^{\infty} \frac{\sinh r}{\sqrt{\pi}\sqrt{\cosh r - \cosh\rho}} 
\left( -\frac{\partial_r}{\sinh r} \right)^{n/2}
\left( r^{-\frac{1+2s}{2}} K_{\frac{1+2s}{2}} \left( \tfrac{n-1}{2}r \right) \right) dr
\]
when $n \geq 2$ is even, where
\[
c_{n,s} = 2 \, \frac{4\sqrt{2} \,\Gamma(n+s)}{3\,\Gamma(n/2)\,\Gamma(-s)}, \quad
C_1 = \frac{1}{2^{n-2+2s}\,\Gamma\!\left( \frac{n-1}{2} \right) \Gamma\!\left( \frac{1+2s}{2} \right)},
\]
and $K_\nu$ is the modified Bessel function of the second kind.
\end{definition}
We consider the following classic semilinear equation with power nonlinearity involving the fractional Laplacian:
\begin{equation}\label{main_equation_1}
(-\Delta_{\hn})^s u = u^p(x), \quad x \in \hn,
\end{equation}
where $1 < p \leq p^*=\frac{n+2s}{n-2s}$.
We say that a nonnegative function $u \in H^s(\hn)$ is a weak solution of \eqref{main_equation_1} if for any test function $\varphi \in C_c^\infty(\hn)$, there holds
\begin{align*}
  \int_{\mathbb{H}^n} u(x) (-\Delta_{\mathbb{H}^n})^s \varphi(x) dV_x &= \int_{\mathbb{H}^n} u(x)^p \varphi(x) dV_x.
\end{align*}

We point out that the existence of nonnegative weak solutions has been showed recently by Bruno and Papageorgiou \cite{BP25} for subcritical exponents. To be more precise, they considered the following more general problem on $\hn$:
\begin{equation*}
    (-\Delta_{\hn})^s u - \lambda ^s u  - u^p = 0,
\end{equation*}
where $\lambda\in(0,1)$ and $1<p<p^*$, and showed that there exists at least one nontrivial nonnegative weak solution in $H^s(\hn)$. They also established the boundedness and regularity of the weak solutions, among many other results.

\begin{theorem}\label{thm1}
Let $n \geq 2$, $0 < s < 1$. Suppose that $u\in H^s(\hn)$ is a nonnegative weak solution of \eqref{main_equation_1}. 
Then there exists a point $P \in \mathbb{H}^n$ such that $u$ is radially symmetric and nondecreasing about $P$.
\end{theorem}

Another key novelty of this paper is that we take advantage of the Helgason-Fourier analysis on hyperbolic spaces to establish the equivalence between the differential form \eqref{main_equation_1} and the integral form
\begin{equation}\label{main_equation_int}
  u(x) = \int_{\hn} G_s(x,y) u^p(y) \, dV_y,
\end{equation}
where $G_{\hn}(x,y)$ is the kernel of the fractional Laplacian on $\hn$ given by 
\begin{equation}\label{Green's fun}
  G_s(\rho) = \int_{-\infty}^\infty \int_{\mathbb{S}^{n-1}} \left(\lambda^2 + \tfrac{(n-1)^2}{4}\right)^{-s} \bar{h}_{\lambda,\theta}(x) h_{\lambda,\theta}(y)| c(\lambda)|^{-2}d\theta d\lambda.
\end{equation}
This equivalence plays an essential role in applying the moving plane method to prove Theorem \ref{thm1}.
The Helgason-Fourier analysis on general symmetric spaces was first developed by Helgason \cites{H-F1,H-F2}. It has been effectively used in harmonic analysis and PDEs. In addition, the Helgason-Fourier analysis plays a key role in the establishment of higher order Hardy-Sobolev-Maz'ya's inequalities and Green's function of GJMS operators, which was studied by Lu and Yang \cites{LY1,LY2, LY3} and Flynn, Lu, Yang \cite{FLY1}. Using Fourier transform, we are able to derive the precise expression of the Green's function $G_s(\rho)$ and study its asymptotic behaviors near zero and infinity, which are the key ingredients in the moving plane method. For heat kernel and Green function estimates on general noncompact symmetric spaces, we refer to \cites{Anker-Ji, Yosida}, among many others.

\begin{theorem} \label{thm-asymptotics}
For $s\in(0,1)$, there holds that
\begin{itemize}
\item For $n\geq 3$ odd,
\begin{equation}\label{G1}
  G_s(\rho)=\alpha_\gamma \left(\frac{\partial_\rho}{\sinh\rho}\right)^\frac{n-1}{2}\rho^{-\frac{1}{2}+s}K_{-\frac{1}{2}+s}\left(\tfrac{n-1}{2} \rho\right),\end{equation}

\item For $n\geq 2$ even,
\begin{equation}\label{G2}
G_s(\rho)=\alpha_\gamma
\int_\rho^\infty\frac{\sinh r}{\sqrt{\cosh r-\cosh\rho}}
\left(\frac{\partial_r}{\sinh r}\right)^\frac{n}{2}\left[r^{-\frac{1}{2}+s}K_{-\frac{1}{2}+s}\left(\tfrac{n-1}{2} r\right)\right]\, dr.
\end{equation}
\end{itemize}
Here $K_{-\frac{1}{2}+s}$ is the solution to the modified Bessel equation given in Lemma \ref{lemma-Bessel}.

Additionally, $G_s(\rho)$ has the asymptotic behavior:
\begin{itemize}
\item As $\rho\to 0$,
  \begin{equation}
    \label{asymptotics-zero}G_s(\rho)\sim \rho^{-n+2s}.
  \end{equation}

\item As $\rho\to\infty$,
  \begin{equation}\label{asymptotics-infty}
    G_s(\rho)\sim \rho^{-1+s}e^{-(n-1)\rho}.
  \end{equation}
\end{itemize}
\end{theorem}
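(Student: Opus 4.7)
The plan is to evaluate the Helgason--Fourier representation for $G_s$ explicitly by recognizing it as an inverse spherical transform and inverting via the Abel transform, then to extract the asymptotics directly from the resulting closed forms. Because $G_s(x,y)$ depends only on $\rho=d(x,y)$, integrating the angular variable $\theta$ over $\mathbb{S}^{n-1}$ collapses the product $\bar h_{\lambda,\theta}(x)h_{\lambda,\theta}(y)$ to the elementary spherical function $\varphi_\lambda(\rho)$, so that
\begin{equation*}
  G_s(\rho) = c_n\int_0^\infty \Bigl(\lambda^2+\tfrac{(n-1)^2}{4}\Bigr)^{-s}\varphi_\lambda(\rho)\,|c(\lambda)|^{-2}\,d\lambda,
\end{equation*}
i.e.\ the inverse spherical transform on $\hn$ of $(\lambda^2+(n-1)^2/4)^{-s}$.

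To invert this, I would use the factorization of the spherical transform on $\hn$ as $\mathcal{F}\circ\mathcal{A}$, where $\mathcal{A}$ is the Abel transform and $\mathcal{F}$ is the one-dimensional Euclidean Fourier transform, so that $G_s=\mathcal{A}^{-1}\mathcal{F}^{-1}\bigl[(\lambda^2+(n-1)^2/4)^{-s}\bigr]$. The classical identity $\mathcal{F}^{-1}[(\lambda^2+a^2)^{-s}](t)=c_s\,a^{1/2-s}|t|^{s-1/2}K_{s-1/2}(a|t|)$, applied with $a=(n-1)/2$, produces exactly the bracketed expression $r^{s-1/2}K_{s-1/2}((n-1)r/2)$ appearing in \eqref{G1}--\eqref{G2}. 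For $n$ odd, $\mathcal{A}^{-1}$ is (up to constants) the local differential operator $(\partial_\rho/\sinh\rho)^{(n-1)/2}$, yielding \eqref{G1} directly; for $n$ even, $\mathcal{A}^{-1}$ composes the same differential operator with an additional half-order Weyl fractional integration whose kernel is precisely $\sinh r/\sqrt{\cosh r-\cosh\rho}$, giving \eqref{G2}. The constant $\alpha_\gamma$ collects the normalizing factors from the Plancherel density $|c(\lambda)|^{-2}$, from $\mathcal{F}^{-1}$, and from $\mathcal{A}^{-1}$.

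For the asymptotics I would work directly from the closed-form expressions. As $\rho\to 0$, the small-argument Bessel expansion gives $\rho^{s-1/2}K_{s-1/2}((n-1)\rho/2)=A+B\rho^{2s-1}+O(\rho^{2})$ (with logarithmic modification if $s=1/2$); the constant $A$ is annihilated by $(\partial_\rho/\sinh\rho)^{(n-1)/2}$, which acts on $\rho^\alpha$ as $\rho^{\alpha-2k}$ to leading order when applied $k$ times, so the $B\rho^{2s-1}$ term dominates and yields $G_s(\rho)\sim \rho^{2s-n}$. As $\rho\to\infty$, the expansion $K_{s-1/2}(z)\sim\sqrt{\pi/(2z)}\,e^{-z}$ gives $\rho^{s-1/2}K_{s-1/2}((n-1)\rho/2)\sim c\,\rho^{s-1}e^{-(n-1)\rho/2}$; each application of $\partial_\rho/\sinh\rho$ contributes a factor of $-(n-1)/2$ from differentiating the exponential and $2e^{-\rho}$ from $1/\sinh\rho$, so after $(n-1)/2$ iterations the exponent becomes $-(n-1)\rho$ and the prefactor remains $\rho^{s-1}$. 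For even $n$, a Laplace-type analysis of the Weyl integral in \eqref{G2} concentrated near $r=\rho$ produces the same leading behavior at both endpoints. The principal obstacle is the even-dimensional case: inverting the Abel transform is nonlocal, so tracking the normalizing constant $\alpha_\gamma$ through the fractional integral requires care, and the Laplace-type analysis of the singular integral in \eqref{G2} must be carried out delicately to ensure that the leading asymptotic coefficients do not vanish by accidental cancellation against subleading terms in the Bessel expansion.
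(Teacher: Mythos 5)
Your argument is essentially the same as the paper's: both collapse the angular integration to an inverse spherical transform, factor it through the Abel transform so that for odd $n$ the kernel is $(\partial_\rho/\sinh\rho)^{(n-1)/2}$ acting on a one-dimensional Euclidean Fourier inversion (with the extra Weyl half-integral for even $n$), identify the bracketed function as $r^{s-1/2}K_{s-1/2}((n-1)r/2)$, and read off the asymptotics from the small- and large-argument Bessel expansions combined with the leading-order action of the differential operator. The only point the paper handles more explicitly and which you invoke without comment is that the classical identity $\mathcal{F}^{-1}\bigl[(\lambda^2+a^2)^{-s}\bigr](t)\propto |t|^{s-1/2}K_{s-1/2}(a|t|)$ only converges absolutely when $s>1/2$; for $s\in(0,1/2]$ the paper justifies it by solving the Bessel-type ODE satisfied by the distributional Fourier transform of $(\lambda^2+\tfrac{(n-1)^2}{4})^{-s}$ and discarding the $I_\nu$ branch on growth grounds, which you should at least note (e.g. via analytic continuation in $s$).
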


The organization of this paper is as follows. In Section \ref{sec:prelim}, we provide some preliminaries on hyperbolic space $\hn$. Section \ref{sec:frac_lap} collects some important information of the fractional Laplacians on $\hn$ and their key properties. The precise expression and asymptotic behaviors of Green's function are provided. Section \ref{sec:proof} is dedicated to the proof of the main theorem. In Section \ref{sec:MP}, we extend several maximum principles to $\hn$.

\section{Preliminaries}
\label{sec:prelim}

\subsection{Models of hyperbolic spaces}
\hfill

The hyperbolic space $\hn$ $(n\geq 2)$ is a complete, simply connected Riemannian manifold with constant sectional curvature $-1$.
There are several analytic models of hyperbolic spaces, all of which are equivalent.
Among them, we describe two models here.

\begin{itemize}
    \item \textit{The Half-space model}:
    It is given by $\mathbb{R}^{n-1}\times \mathbb{R}^+=\{(x_1,\cdots,x_{n-1},x_n):x_n>0\}$,
    equipped with the Riemannian metric
    $$ds^2=\frac{dx_1^2+\cdots+dx_n^2}{x_n^2}.$$
    The hyperbolic volume element is $dV = \frac{dx}{x_n^n}$, where $dx$ is the Lebesgue measure on $\mathbb{R}^n$. The hyperbolic gradient is $\nabla_\mathbb{H} = x_n \nabla$ and the Laplace-Beltrami operator on $\mathbb{H}^n$ is given by
$$
  \Delta_{\mathbb{H}^n} = x_n^2 \Delta - (n-2) x_n \frac{\partial}{\partial x_n},
$$
where $\Delta$ is the usual Laplacian on $\mathbb{R}^n$.

    \item \textit{The Poincar\'e ball model}:
    It is given by the open unit ball $\mathbb{B}^n=\{x=(x_1,\cdots,x_n):x_1^2+\cdots+x_n^2<1\}\in\mathbb{R}^n$ equipped with the Poincar\'e metric
    $$ds^2=\frac{4\left(dx_1^2+\cdots+dx_n^2\right)}{\left(1-|x|^2\right)^2}.$$
    The distance from $x\in\mathbb{B}^n$ to the origin is $\rho(x,0)=\log\frac{1+|x|}{1-|x|}$.
    The hyperbolic volume element is $dV=\left(\frac{2}{1-|x|^n}\right)^ndx$.
    The hyperbolic gradient is $\nabla_{\mathbb{H}}=\frac{1-|x|^2}{2}\nabla$ and the associated Laplace-Beltrami operator is given by
    $$\Delta_{\mathbb{H}^n}=\frac{1-|x|^2}{4}\left((1-|x|^2)\Delta+2(n-2)\sum_{i=1}^nx_i\frac{\partial}{\partial x_i}\right).$$
    \item \textit{The hyperboloid model}:
    It is given by the upper sheet of the two-sheeted hyperboloid in $\mathbb{R}^{n,1}$:
    $$\mathbb{H}^n=\{x=(x_0,x_1,\cdots,x_n)\in\mathbb{R}^{n,1}:x_0^2-x_1^2-\cdots-x_n^2=1,x_0>0\},$$
    equipped with the metric induced from the Lorentzian metric on $\mathbb{R}^{n,1}$:
    $$  ds^2=-dx_0^2+dx_1^2+\cdots+dx_n^2.$$  
\end{itemize}

\subsection{Hardy-Littlewood-Sobolev inequality}

The Hardy-Littlewood-Sobolev inequality inequality on hyperbolic ball model $\bn$ is equivalent to the one on the hyperbolic upper half spaces model.
It was first proved on half spaces by Beckner \cite{Beckner}, and then on Poincar\'e ball by Lu and Yang \cite{LY1}.

\textbf{Theorem.} \textit{Let $0<\lambda<n$ and $p=\frac{2n}{2n-\lambda}$. Then for $f,g\in L^p(\bn)$,}
\begin{equation}\label{HLS}
    \left|\int_{\bn}\int_{\bn}\frac{f(x)g(y)}{(2\sinh (\frac{\rho(x,y)}{2}))^\lambda}dV_xdV_y\right|\leq C_{n,\lambda}\|f\|_p\|g\|_p,
\end{equation}
\textit{where $\rho(x,y)$ denotes the geodesic distance between $x$ and $y$ and }
\begin{equation*}
    C_{n,\lambda}=\pi^{\lambda/2}\frac{\Gamma(\frac{n}{2}-\frac{\lambda}{2})}{\Gamma(n-\frac{\lambda}{2})}\left(\frac{\Gamma(\frac{n}{2})}{\Gamma(n)}\right)^{-1+\frac{\lambda}{n}}
\end{equation*}
\textit{is the best constant for the classical Hardy-Littlewood-Sobolev constant on $\mathbb{R}^n$.
Furthermore, the constant $C_{n,\lambda}$ is sharp and there is no nonzero extremal function for the inequality (\ref{HLS}).}

Sometimes it is more convenient to rewrite the inequality (\ref{HLS}) as
\begin{equation}\label{HLS-2}
    \|I_\alpha f\|_{L^{q}(\hn)} \lesssim \|f\|_{L^{p}(\hn)}.
\end{equation}
where $\frac{1}{q}=\frac{1}{p}-\frac{\alpha}{n}, 1<p<\frac{n}{\alpha}$, and $I_\alpha: L^p(\hn)\to L^q(\hn)$ is given by
\[I_\alpha f(x)=\int_{\mathbb H^n} \frac{f(y)}{\sinh^{n-\alpha}\frac{\rho(x,y)}{2}}\,dV_g(y).\]

\subsection{Sobolev inequality for fractional Laplacians on \texorpdfstring{$\hn$}{}}
\hfill

We collect some known Sobolev inequalities for fractional Laplacians on hyperbolic spaces; see e.g. \cites{BGS15}. Denote by $H^s(\mathbb H^n)$ the fractional Sobolev space on $\mathbb H^n$ defined as the completion of $C_c^\infty(\mathbb H^n)$ under the norm
\[\|u\|_{H^s(\mathbb H^n)}=\Big(\|u\|_{L^2(\mathbb H^n)}^2+\mathcal E_s(u,u)\Big)^{1/2},\]
where
\[\mathcal E_s(u,u):=\langle u,(-\Delta_{\mathbb H^n})^s u\rangle_{L^2(\mathbb H^n)}.\]
\begin{theorem}
There exists a constant $C=C(n,s)>0$ such that for every $u\in C_c^\infty(\mathbb H^n)$,
\begin{equation}\label{frac-sob-hn}
\Bigl(\int_{\mathbb H^n} |u|^{2^*_s}\,dV_g\Bigr)^{2/2^*_s}
\le C\,\mathcal E_s(u,u).
\end{equation}
\end{theorem}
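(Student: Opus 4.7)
My plan is to reduce the Sobolev inequality \eqref{frac-sob-hn} to the Hardy--Littlewood--Sobolev inequality \eqref{HLS-2} on $\hn$, by representing $u$ as a Riesz-type convolution of $(-\Delta_{\hn})^{s/2}u$ against a kernel whose pointwise size can be controlled by $\sinh^{-(n-s)}(\rho/2)$.

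First, I would set $v:=(-\Delta_{\hn})^{s/2}u$, which is well defined for $u\in C_c^\infty(\hn)$ via the Helgason--Fourier spectral calculus and satisfies $\mathcal E_s(u,u)=\|v\|_{L^2(\hn)}^2$ by Plancherel. I would then introduce the Green's function $K$ of $(-\Delta_{\hn})^{s/2}$, whose Helgason--Fourier transform is $(\lambda^2+(n-1)^2/4)^{-s/2}$. The derivation behind Theorem \ref{thm-asymptotics} rests only on the Bessel representation of this spectral multiplier, so it applies verbatim with $s$ replaced by $s/2\in(0,\tfrac12)$, yielding explicit formulas of the form \eqref{G1}--\eqref{G2} for $K$ together with the asymptotics
\[
K(\rho)\sim \rho^{-n+s}\ \text{ as }\rho\to 0,\qquad K(\rho)\sim \rho^{-1+s/2}e^{-(n-1)\rho}\ \text{ as }\rho\to\infty,
\]
as well as the convolution identity $u(x)=\int_{\hn} K(d(x,y))\,v(y)\,dV_g(y)$.

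Next, I would compare $K$ pointwise to the HLS kernel $\sinh^{-(n-s)}(\rho/2)$, which behaves as $2^{n-s}\rho^{-(n-s)}$ near the origin and as $2^{n-s}e^{-(n-s)\rho/2}$ at infinity. The singular order at $0$ matches, and since $n-1>(n-s)/2$ for $n\ge 2$ and $s\in(0,1)$, $K$ decays strictly faster at infinity. Positivity and continuity on $(0,\infty)$ then give a uniform comparison
\[
K(\rho)\le C\,\sinh^{-(n-s)}(\rho/2)\quad\text{for all }\rho>0,
\]
so that $|u(x)|\le C\,I_s(|v|)(x)$. Applying \eqref{HLS-2} with $\alpha=s$, $p=2$, $q=2^*_s=\tfrac{2n}{n-2s}$ (which satisfies $\tfrac1p-\tfrac{\alpha}{n}=\tfrac1q$) yields
\[
\|u\|_{L^{2^*_s}(\hn)}\le C\,\|I_s(|v|)\|_{L^{2^*_s}(\hn)}\le C'\,\|v\|_{L^2(\hn)}=C'\,\mathcal E_s(u,u)^{1/2}.
\]
Squaring this is \eqref{frac-sob-hn}, and the general case follows by density of $C_c^\infty(\hn)$ in $H^s(\hn)$.

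The main obstacle is making the representation $u(x)=\int K(d(x,y))v(y)\,dV_g$ rigorous: one must apply the Helgason--Fourier inversion with sufficient integrability and verify that the Bessel-transform computation leading to Theorem \ref{thm-asymptotics} genuinely carries over to the parameter $s/2$ in the admissible range $(0,\tfrac12)$. Once that is settled, the kernel comparison and the application of HLS are essentially soft.
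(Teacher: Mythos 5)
The paper itself does not prove this theorem; it simply records the inequality and cites Banica--Gonz\'alez--S\'aez \cite{BGS15}, so there is no in-paper proof to compare against. Your proposed argument is a correct and standard reduction of the fractional Sobolev inequality to the Hardy--Littlewood--Sobolev inequality \eqref{HLS-2}, and each step checks out: $\mathcal E_s(u,u)=\|(-\Delta_{\hn})^{s/2}u\|_{L^2}^2$ by the Helgason--Fourier Plancherel formula; the Green's function $K$ of $(-\Delta_{\hn})^{s/2}$ is obtained by literally substituting $s/2$ for $s$ in the Bessel computation underlying Theorem \ref{thm-asymptotics} (the parameter $s/2\in(0,\tfrac12)$ is in the admissible range), giving $K(\rho)\sim\rho^{-(n-s)}$ at $0$ and $K(\rho)\sim\rho^{-1+s/2}e^{-(n-1)\rho}$ at $\infty$; the HLS kernel $\sinh^{-(n-s)}(\rho/2)$ matches the singular order at $0$ and decays like $e^{-(n-s)\rho/2}$ at $\infty$, and since $n-1>(n-s)/2$ for $n\ge2$, $s\in(0,1)$, the ratio $K(\rho)/\sinh^{-(n-s)}(\rho/2)$ is bounded on $(0,\infty)$ by positivity and continuity; finally, $\alpha=s$, $p=2$, $q=2^*_s=\tfrac{2n}{n-2s}$ satisfy the HLS exponent relation $\tfrac1q=\tfrac1p-\tfrac{\alpha}{n}$ with $1<2<n/s$. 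The only detail worth stating more carefully, as you yourself flag, is that the identity $u=K*v$ is just $u=(-\Delta_{\hn})^{-s/2}(-\Delta_{\hn})^{s/2}u$ for $u\in C_c^\infty(\hn)$, valid by the spectral calculus once one has shown (via the distributional Fourier-transform argument in the proof of Theorem \ref{thm-asymptotics}) that the multiplier $(\lambda^2+\tfrac{(n-1)^2}{4})^{-s/2}$ corresponds to a genuine radial $L^1_{\mathrm{loc}}$ kernel.
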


\begin{theorem}
Let $H^s_{\mathrm{rad}}(\mathbb H^n)\subset H^s(\mathbb H^n)$ denote the subspace of radial functions, then one has the compact embedding:
\[
H^s_{\mathrm{rad}}(\mathbb H^n)\hookrightarrow L^q(\mathbb H^n)\qquad\text{for } 1\le q<\frac{n+2s}{n-2s}.
\]
\end{theorem}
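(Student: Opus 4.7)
The plan is to follow the classical recipe for compact Sobolev embeddings: combine a local Rellich--Kondrachov statement on geodesic balls with a uniform tail estimate that exploits the rotational symmetry together with the exponential volume growth of $\mathbb H^n$. Take a bounded sequence $\{u_k\}\subset H^s_{\mathrm{rad}}(\mathbb H^n)$; by reflexivity of $H^s$ one may extract a subsequence with $u_k\rightharpoonup u$ weakly, and $u$ is still radial since the radial subspace is weakly closed. The aim is to upgrade this to strong convergence in $L^q(\mathbb H^n)$.

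On any geodesic ball $B_R\subset\mathbb H^n$, the standard Rellich--Kondrachov theorem applied on the compact manifold-with-boundary $\overline{B_R}$ gives a compact embedding $H^s(B_R)\hookrightarrow L^q(B_R)$ in the whole range $q<\tfrac{2n}{n-2s}$, so $u_k\to u$ strongly in $L^q(B_R)$ for each fixed $R$. For the tail, I would establish a Strauss-type radial decay lemma: writing $v_k(\rho)=u_k(x)$ for $\rho=\rho(x,0)$,
\begin{equation*}
|v_k(\rho)|\le C\,\|u_k\|_{H^s(\mathbb H^n)}\,e^{-(n-1)\rho/2},\qquad \rho\ge 1.
\end{equation*}
The underlying identity is $\|u\|_{L^2(\mathbb H^n)}^{2}=\omega_{n-1}\int_0^\infty |v|^2\sinh^{n-1}\rho\,d\rho$, together with the exponential volume growth $\sinh^{n-1}\rho\sim e^{(n-1)\rho}$ at infinity. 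For smooth radial $u$ the bound follows by expressing $v^2\sinh^{n-1}\rho$ as a one-sided integral of its derivative and applying Cauchy--Schwarz; for fractional $s\in(0,1)$ the same decay should come out either from the Helgason--Fourier representation of radial $H^s$ functions (where $\hat u(\lambda)$ is scalar-valued against Plancherel weight $|c(\lambda)|^{-2}d\lambda$), or from the convolution representation $u=G_s\ast f$ with $f\in L^2$ combined with the asymptotic $G_s(\rho)\sim \rho^{-1+s}e^{-(n-1)\rho}$ proved in Theorem~\ref{thm-asymptotics}.

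With the pointwise bound in hand, the tail is controlled through the simple interpolation
\begin{equation*}
\int_{\rho(x)>R}|u_k|^q\,dV_g \le \bigl(\sup_{\rho>R}|v_k(\rho)|\bigr)^{q-2}\,\|u_k\|_{L^2(\mathbb H^n)}^{2} \le Ce^{-(q-2)(n-1)R/2}\|u_k\|_{H^s}^q,
\end{equation*}
valid for $q>2$ and uniform in $k$; for $1\le q\le 2$ the missing integrability is supplied by the spectral gap $(n-1)^2/4$ of $-\Delta_{\mathbb H^n}$, which yields $\|u\|_{L^2(\mathbb H^n)}\le C\|(-\Delta_{\mathbb H^n})^{s/2}u\|_{L^2}$ and permits an analogous H\"older interpolation against the $L^{2^*_s}$ norm provided by the fractional Sobolev inequality \eqref{frac-sob-hn}. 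Splitting $\|u_k-u\|_{L^q(\mathbb H^n)}\le \|u_k-u\|_{L^q(B_R)}+\|u_k\|_{L^q(\mathbb H^n\setminus B_R)}+\|u\|_{L^q(\mathbb H^n\setminus B_R)}$, choosing $R$ large so that the two tail terms are each less than $\varepsilon$ uniformly in $k$, and then sending $k\to\infty$ closes the argument.

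The main obstacle is the radial decay lemma in the fractional regime: unlike the integer case $s=1$, one has no pointwise derivative $v'$ to integrate in $\rho$, so the cleanest route appears to go through either the Helgason--Fourier machinery (using the Harish-Chandra $c$-function) or through the Green's function estimates developed earlier in this paper. Once that lemma is secured, the rest is a routine tightness-plus-local-compactness combination.
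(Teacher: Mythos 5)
The paper does not actually prove this theorem: it is stated as a known fact and deferred to the cited references (Hebey, Lions, Mancini--Sandeep, BGS15), so there is no internal proof to compare against. Your overall strategy --- weak compactness, local Rellich--Kondrachov on balls, and a Strauss-type radial decay estimate feeding a tightness argument --- is indeed the standard route used in the cited literature for radial compactness on $\mathbb H^n$, and the skeleton is sound for exponents $q>2$.

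There are, however, two genuine gaps. First, the fractional radial decay lemma is not merely technical. If you try the Helgason--Fourier route directly, writing $u(\rho)=\int_0^\infty\hat u(\lambda)\,\phi_\lambda(\rho)\,|c(\lambda)|^{-2}\,d\lambda$ and using the bound $|\phi_\lambda(\rho)|\lesssim(1+\rho)e^{-(n-1)\rho/2}$ together with Cauchy--Schwarz against the weight $(\lambda^2+\tfrac{(n-1)^2}{4})^{s}$, the resulting constant is $\bigl(\int_0^\infty(\lambda^2+\tfrac{(n-1)^2}{4})^{-s}|c(\lambda)|^{-2}d\lambda\bigr)^{1/2}$; since $|c(\lambda)|^{-2}\sim\lambda^{n-1}$ at infinity, this integral \emph{diverges} for every $s<n/2$, i.e.\ always in the range at hand. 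So the naive Cauchy--Schwarz you sketch does not close; one needs a frequency decomposition (low frequencies controlled by the spectral gap, high frequencies by a local one-dimensional Sobolev argument along the radial variable), and that extra step is precisely where the fractional case differs from $s=1$. The convolution route $u=G_{s/2}\ast f$ has the same problem: the pointwise kernel estimate does not immediately yield a pointwise bound on $u$ from $f\in L^2$ without an additional integrability check. Second, your treatment of $1\le q\le 2$ is not coherent: H\"older interpolation between $L^2$ and $L^{2^*_s}$ only reaches exponents in $[2,2^*_s]$, never below $2$, and on a manifold of infinite volume one cannot trade $L^2$ for $L^q$, $q<2$, without a weight. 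In fact the continuous embedding $H^s_{\mathrm{rad}}(\mathbb H^n)\hookrightarrow L^1(\mathbb H^n)$ is false: the radial function $v(\rho)=e^{-a\rho}$ with $(n-1)/2<a<n-1$ lies in $H^1_{\mathrm{rad}}(\mathbb H^n)$ (hence in $H^s_{\mathrm{rad}}$) but not in $L^1(\mathbb H^n)$. So the range $1\le q$ as written cannot be correct; the compact embedding should start above $q=2$, and you should flag this rather than attempt to supply a proof for a claim that fails.
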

For a general survey and the manifold setting, one may be referred to Hebey \cite{Hebey00}. 

\subsection{The Helgason-Fourier transform on hyperbolic spaces}
\hfill

Denote $$e_{\lambda,\zeta}(x)=\left(\frac{\sqrt{1-|x|^2}}{|x-\zeta|}\right)^{n-1+i\lambda}, \; x\in\mathbb{B}^n, \lambda\in\mathbb{R}, \zeta\in\mathbb{S}^{n-1}$$
the generalized eigenfunctions of the Laplace-Beltrami operator $\Delta_{\hn}$ on the ball model $\bn$ with eigenvalue $-(\lambda^2+\frac{(n-1)^2}{4})$. Equivalently, the eigenfunctions on the hyperboloid model is
\[h_{\lambda,\theta}=[x, (1, \theta)]^{i\lambda-(n-1)/2}\]
and satisfies $\Delta_{\hn} h_{\lambda,\theta} = -(\lambda^2 + \frac{(n-1)^2}{4}) h_{\lambda,\theta}$.
The Fourier transform of a function $f$ on $\mathbb{H}^n$ is defined as
$$\hat{f}(\lambda,\zeta)=\int_{\bn}f(x)e_{-\lambda,\zeta}(x)dV,$$
provided the integral exists. Moreover, the following inversion formula holds for $f\in C^\infty_0(\bn)$:
$$f(x)=D_n\int_{-\infty}^\infty\int_{\mathbb{S}^{n-1}}\hat{f}(\lambda,\zeta)e_{\lambda,\zeta}(x)|c(\lambda)|^{-2}d\lambda d\sigma,$$
where $D_n=(2^{3-n}\pi|\mathbb{S}^{n-1}|)^{-1}$ and $c(\lambda)$ is the Harish-Chandra's $c$-function given by
$$c(\lambda)=\frac{2^{n-1-i\lambda}\Gamma(n/2)\Gamma(i\lambda)}{\Gamma(\frac{n-1+i\lambda}{2})\Gamma(\frac{1+i\lambda}{2})}.$$
There also holds the Plancherel formula:
$$\int_{\bn}|f(x)|^2dV=D_n\int_{-\infty}^\infty\int_{\mathbb{S}^{n-1}}\hat{f}(\lambda,\zeta)|c(\lambda)|^{-2}d\lambda d\sigma.$$
For complete details, we refer to \cites{H-F1,H-F2}.

\subsection{Foliations of hyperbolic spaces}\label{sec:hyperbolic}
\hfill 

A foliation is an equivalence relation on a manifold, with the equivalence classes being connected, injectively  submanifolds, all of the same dimension.
Let $\mathbb{R}^{n,1}=(\mathbb{R}^{n+1},\cdot)$, where $\cdot$ is Lorentzian inner product defined by $x\cdot y=-x_0y_0+x_1y_1+\cdots+x_ny_n$. The hyperboloid model
of $\hn$ is the submanifold $\{x\in\mathbb{R}^{n,1}: x\cdot x=-1, x_0>0\}$. A particular directional foliation can be obtained by choosing any $x_i$ direction, $i=1,\cdots,n$.
Without loss of generality, we may choose $x_1$ direction. Denote $\mathbb{R}^{n,1}=\mathbb{R}^{1,1}\times\mathbb{R}^{n-1}$, where $(x_0,x_1)\in \mathbb{R}^{1,1}$. We define
$A_t=\tilde{A}_t\otimes Id_{\mathbb{R}^{n-1}}$, where $\tilde{A}_t$ is the hyperbolic rotation (also called a boost) in $\mathbb{R}^{1,1}$,
$$\tilde{A}_t=\begin{pmatrix}
    \cosh t & \sinh t \\
    \sinh t & \cosh t
\end{pmatrix}.$$
Let $U=\hn\cap\{x_1=0\}$ and $U_t=A_t(U)$, $\hn$ is then foliated by $U_t$ and $\hn=\bigcup_{t\in\rn}U_t$.
The reflection $I$ is an isometry such that $I^2=Id$ and $I$ fixes the hypersurface $U$, by $I(x_0,x_1,x_2,\cdots,x_n)=(x_0,-x_1,x_2,\cdots,x_n)$.
Moreover, the reflection w.r.t. $U_t$ is defined as $I_t=A_t\circ I\circ A_{-t}$, and $U_t$ is fixed by $I_t$.

\subsection{Bessel equations}
\begin{lemma}[\cite{AS64}, or Lemma 2.3 of \cite{BGS15}]\label{lemma-Bessel}
The solution of the ODE
\begin{equation*}
\label{Bessel1}\partial_{ss} \varphi+ \frac{\alpha}{s}\, \partial_s \varphi -\varphi = 0. \end{equation*}
may be written as $\varphi(s)=s^\nu \psi(s)$, for $\alpha=1-2\nu$, where $\psi$ solves the is the well known Bessel equation
\begin{equation}\label{Bessel2}
s^2\psi''+s\psi'-(s^2+\nu^2)\psi=0.
\end{equation}
In addition, \eqref{Bessel2} has two linearly independent solutions, $I_\nu,K_\nu$, which are the modified Bessel functions; their asymptotic behavior is given precisely by
\begin{align*}
I_\nu(s)&\sim \frac{1}{\Gamma(\nu+1)}\left(\frac{s}{2}\right)^\nu\left( 1+\frac{s^2}{4(\nu+1)}+\frac{s^4}{32(\nu+1)(\nu+2)}+\ldots\right),\\
K_\nu(s)&\sim \frac{\Gamma(\nu)}{2}\left(\frac{2}{s}\right)^{\nu}
\left( 1+\frac{s^2}{4(1-\nu)}+\frac{s^4}{32(1-\nu)(2-\nu)}+\ldots\right)
\\&\quad+\frac{\Gamma(-\nu)}{2}\left(\frac{s}{2}\right)^\nu\left( 1+\frac{s^2}{4(\nu+1)}+\frac{s^4}{32(\nu+1)(\nu+2)}+\ldots\right),
\end{align*}

for $s\to 0^+$, $\nu\not\in\mathbb Z$. And when $s\to +\infty$,
\begin{align*}\label{asymptotic2}
    I_\nu(s)\sim \frac{1}{\sqrt{2\pi s}}e^s\left(1-\frac{4\nu^2-1}{8s}+\frac{(4\nu^2-1)(4\nu^2-9)}{2!(8s)^2}-\ldots \right),\\
  K_\nu(s)\sim \sqrt{\frac{\pi}{2s}}e^{-s}\left(1+\frac{4\nu^2-1}{8s}+\frac{(4\nu^2-1)(4\nu^2-9)}{2!(8s)^2}+\ldots \right).
\end{align*}
\end{lemma}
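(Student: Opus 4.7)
The plan is to verify the change of variables by direct substitution, then establish the two linearly independent solutions via the Frobenius method at $s=0$ and a Liouville-Green reduction at $s=\infty$. The whole statement is classical (it is essentially a quotation from \cite{AS64}), so the role of the argument is to record the mechanism; precise coefficients of the formal expansions are then read off from standard references.

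First I would substitute $\varphi(s)=s^\nu \psi(s)$ into
$$\partial_{ss}\varphi+\frac{\alpha}{s}\partial_s\varphi-\varphi=0.$$
Computing $\varphi'=\nu s^{\nu-1}\psi+s^\nu\psi'$ and $\varphi''=\nu(\nu-1)s^{\nu-2}\psi+2\nu s^{\nu-1}\psi'+s^\nu\psi''$, and multiplying the resulting equation by $s^{2-\nu}$, one obtains
$$s^2\psi''+(2\nu+\alpha)s\psi'+\bigl(\nu(\nu-1)+\nu\alpha\bigr)\psi-s^2\psi=0.$$
The choice $\alpha=1-2\nu$ makes $2\nu+\alpha=1$ and $\nu(\nu-1)+\nu\alpha=-\nu^2$, and the equation reduces exactly to \eqref{Bessel2}.

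Next, to construct $I_\nu$ I would apply the Frobenius ansatz $\psi(s)=\sum_{k\geq 0}a_k s^{k+\rho}$ to \eqref{Bessel2}. The indicial equation $\rho^2=\nu^2$ has roots $\rho=\pm\nu$, and the recurrence $a_{2k}=\frac{a_{2k-2}}{4k(k+\nu)}$ with normalization $a_0=\frac{1}{2^\nu\Gamma(\nu+1)}$ yields
$$I_\nu(s)=\sum_{k=0}^\infty\frac{1}{k!\,\Gamma(k+\nu+1)}\left(\frac{s}{2}\right)^{2k+\nu}.$$
Reading off the first few terms produces the stated small-$s$ expansion for $I_\nu$. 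For $\nu\notin\mathbb{Z}$, the second indicial root gives an independent solution $I_{-\nu}$, and one defines
$$K_\nu(s)=\frac{\pi}{2\sin(\nu\pi)}\bigl(I_{-\nu}(s)-I_\nu(s)\bigr),$$
so the small-$s$ expansion for $K_\nu$ is obtained by combining the two series; the leading $\Gamma(\nu)(s/2)^{-\nu}/2$ term comes from $I_{-\nu}$.

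For the behavior as $s\to+\infty$, I would pass to the Liouville-Green normal form by setting $\psi(s)=s^{-1/2}\phi(s)$, which turns \eqref{Bessel2} into
$$\phi''-\left(1+\frac{\nu^2-\tfrac14}{s^2}\right)\phi=0.$$
A formal WKB ansatz $\phi(s)\sim e^{\pm s}\sum_{j\geq 0}c_j^{\pm}s^{-j}$ yields a triangular recurrence for the coefficients, and solving it produces the series with coefficients $\prod_{i=1}^{j}(4\nu^2-(2i-1)^2)/(8^j j!)$, matching the expansions stated in the lemma for both $I_\nu$ (growing branch) and $K_\nu$ (decaying branch). Equivalently, one can start from the integral representation $K_\nu(s)=\int_0^\infty e^{-s\cosh t}\cosh(\nu t)\,dt$ and apply Laplace's method at the saddle $t=0$, expanding $\cosh t-1$ and $\cosh(\nu t)$ in Taylor series to any desired order.

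The main technical obstacle is the bookkeeping of the full asymptotic series at infinity; however, since the lemma is quoted verbatim from \cite{AS64}, it suffices to record the WKB reduction above and cite that reference for the explicit form of the coefficients.
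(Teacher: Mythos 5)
Your proposal is correct; the paper offers no proof of this lemma at all, simply quoting it from \cite{AS64} and Lemma 2.3 of \cite{BGS15}, and your verification of the substitution $\varphi=s^\nu\psi$ (with $2\nu+\alpha=1$ and $\nu(\nu-1)+\alpha\nu=-\nu^2$) together with the Frobenius series at $s=0$ and the Liouville--Green reduction at infinity is the standard, accurate derivation of exactly these expansions. Nothing further is needed beyond the citation you already supply for the explicit coefficients.
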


\section{Fractional Laplacian on hyperbolic spaces}
\label{sec:frac_lap}
\hfill

The Caffarelli--Silvestre extension method can be directly applied to asymptotic hyperbolic Einstein manifolds $(X,g^+)$ with boundary $M$. In the case $M = \rn$ and $X = \mathbb{R}^{n+1}$ endowed with the hyperbolic metric $g_{\mathbb{H}}=\frac{dy^2+|dx|^2}{y^2}$, the scattering operator coincides with the extension problem for the fractional Laplacian when $s\in (0, 1)$. This is firstly observed by Chang and Gonz\'alez \cite{CG11}.
Consider the upper half-space model of $\mathbb H^{n+1}$:
\[
X = \mathbb R^n_x \times (0,\infty)_y, \quad g^+ = \frac{dy^2 + dx^2}{y^2}.
\]

The key result is that, in the case $M = \rn$ with $\hat{g}=|dx|^2$, $X = \mathbb{R}^{n+1}$ with coordinates $x \in \rn, y > 0$, endowed the hyperbolic metric $g_{\mathbb{H}} = \frac{dy^2+|dx|^2}{y^2}$, the scattering operator is nothing but the Caffarelli-Silvestre extension problem for the fractional Laplacian when $\gamma\in (0, 1)$.
For $\lambda \in \mathbb C$, consider
\[
(-\Delta_{g^+} - \lambda(n-\lambda)) U(x,y) = 0, \quad U \sim y^{n-\lambda} F + y^\lambda G \text{ as } y\to 0.
\]
The scattering operator $S(\lambda)$ is defined by $S(\lambda) F = G$, and the conformally covariant fractional powers of the Laplacian is defined as 
\begin{align}
  P_\gamma[\hat{g}, g^+] &= d_\gamma S\left(\frac{n}{2} + \gamma\right), \quad d_\gamma = 2^{2\gamma} \frac{\Gamma(\gamma)}{\Gamma(-\gamma)}. 
\end{align}
The operators $P_\gamma[\hat{g}, g^+]$ satisfy an important conformal covariance property \cite{GZ03}:
if $\hat{g}_w = w^{\frac{4}{n-2\gamma}} \hat{g}$, then
\[P_\gamma[\hat{g}_w, g^+] \phi = w^{-\frac{n+2\gamma}{n-2\gamma}} P_\gamma[\hat{g}, g^+] (w \phi).\]

Banica--Gonz\'alez--S\'aez \cite{BGS15} showed that for certain complete, noncompact manifolds $X$, the fractional Laplacian on $M$ can be defined via an extension problem on $X\times\rn_+$. 
In particular, they provided an explicit singular integral kernel representation of the fractional Laplacian on hyperbolic spaces $\hn$, using the Helgason-Fourier transform \cites{H-F1,H-F2}. For $u \in C_c^\infty(\mathbb H^n)$, the fractional Laplacian is defined by
\[(-\Delta_{\mathbb H^n})^s u(x) = \int_{-\infty}^\infty \int_{\mathbb S^{n-1}} \left(\lambda^2 + \tfrac{(n-1)^2}{4}\right)^s \, \hat{u}(\lambda, \theta) \, h_{\lambda,\theta}(x) \, \frac{d\theta d\lambda}{|c(\lambda)|^2},
\]
where $h_{\lambda,\theta} = [x,(1,\theta)]^{\,i\lambda-\frac{n-1}{2}}$ are the generalized eigenfunctions of the Laplace-Beltrami operator on $\mathbb H^n$ (hyperboloid model) which satisfy
\[\Delta_{\mathbb H^n} h_{\lambda,\theta} = -\left(\lambda^2 + \tfrac{(n-1)^2}{4}\right) h_{\lambda,\theta},\]
and $c(\lambda)$ is the Harish-Chandra coefficient.
This produces the singular integral kernel representation:
\[
(-\Delta_{\mathbb H^n})^s u(x) = C_{n,s} \; \mathrm{P.V.} \int_{\mathbb H^n} \frac{u(x)-u(y)}{(\sinh(d_{\mathbb H^n}(x,y)/2))^{n+2s}} \, dV_y.
\]
We point out here that the above two definitions of fractional Laplacians are not equivalent. Although the later is also defined via an extension problem, the manifold $\hn\times (0,\infty)$ is not conformally compact Einstein. In fact, the former operator (Chang-Gonz\'alez) lives on the conformal infinite of $X$ while the latter (Banica--Gonz\'alez--S\'aez) is defined on the bulk manifold $X$ itself. As a consequence, the fractional Laplacian we treat here does not coincide with the scattering operator on hyperbolic spaces, which is not conformally covariant. 

The following asymptotic estimates of the kernel $\mathcal{K}_{n,s}(\rho)$ were obtain in \cite{BGS15}.
\begin{proposition}[\cite{BGS15}]
There exist constants $ c, C > 0 $ such that
\[
c \, \rho^{-\frac{1+2\rho}{2}} (\sinh \rho)^{-\frac{n-1}{2}} K_{\frac{n+s\rho}{2}} \!\left( \frac{n-1}{2} \rho \right)\leq K_{n,s,\rho}(\rho) \leq C \, \rho^{-\frac{1+2\rho}{2}} (\sinh \rho)^{-\frac{n-1}{2}} K_{\frac{n+2\rho}{2}} \!\left( \frac{n-1}{2} \rho \right)
\]
In particular,
\[
\mathcal{K}_{n,s}(\rho) \sim \rho^{-n-2s}
\]
as $ \rho \to 0^+ $, and
\[
\mathcal{K}_{n,s}(\rho) \sim \rho^{-1 - s} e^{-(n-1)\rho}
\]
as $ \rho \to \infty $.
\end{proposition}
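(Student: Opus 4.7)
The plan is to read off the asymptotics directly from the explicit kernel formulas \eqref{kernel}, using the small- and large-argument expansions of $K_\nu$ recorded in Lemma \ref{lemma-Bessel}. Set $\nu:=(1+2s)/2\notin\mathbb Z$, $L:=-\partial_\rho/\sinh\rho$, and $\phi(\rho):=\rho^{-\nu}K_\nu(\tfrac{n-1}{2}\rho)$, so that for $n$ odd one has $\mathcal{K}_{n,s}=C_1 L^{(n-1)/2}\phi$, and for $n$ even $\mathcal{K}_{n,s}(\rho)=(C_1/\sqrt\pi)\int_\rho^\infty\frac{\sinh r}{\sqrt{\cosh r-\cosh\rho}}L^{n/2}\phi(r)\,dr$.

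\emph{Odd case.} The two-branch expansion of $K_\nu$ at the origin gives $\phi(\rho)=A\rho^{-(1+2s)}(1+O(\rho^2))+h(\rho)$ with $h$ real-analytic and even, and the large-$z$ expansion gives $\phi(\rho)\sim B\rho^{-1-s}e^{-(n-1)\rho/2}(1+O(1/\rho))$ at infinity. Using $1/\sinh\rho=1/\rho+O(\rho)$ near the origin, the operator $L$ sends $\rho^\alpha(1+O(\rho^2))$ to $-\alpha\rho^{\alpha-2}(1+O(\rho^2))$; iterating $(n-1)/2$ times yields $L^{(n-1)/2}\phi(\rho)\sim c\,\rho^{-n-2s}$ with $c=A\prod_{j=0}^{(n-3)/2}(1+2j+2s)\ne 0$, which is the claimed $\rho\to 0^+$ asymptotic. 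At infinity, $\sinh\rho\sim e^\rho/2$, so $L$ sends $\rho^\alpha e^{-\beta\rho}(1+O(1/\rho))$ to $2\beta\rho^\alpha e^{-(\beta+1)\rho}(1+O(1/\rho))$ to leading order; iterating $(n-1)/2$ times starting from $\beta=(n-1)/2$ gives $L^{(n-1)/2}\phi(\rho)\sim c'\rho^{-1-s}e^{-(n-1)\rho}$.

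\emph{Even case.} By the same reasoning, $\Phi(r):=L^{n/2}\phi(r)$ satisfies $\Phi(r)\sim c_1 r^{-1-2s-n}$ as $r\to 0$ and $\Phi(r)\sim c_2 r^{-1-s}e^{-(2n-1)r/2}$ as $r\to\infty$. Substituting into the Abel integral, for $\rho\to 0^+$ use $\cosh r-\cosh\rho=2\sinh\tfrac{r+\rho}{2}\sinh\tfrac{r-\rho}{2}\approx(r^2-\rho^2)/2$ and $\sinh r\approx r$ on $r\sim\rho$; the substitution $r=\rho\sec\theta$ reduces the leading integral to $\rho^{-n-2s}\int_0^{\pi/2}(\cos\theta)^{n+2s-1}\,d\theta$, while the tail $r\ge 1$ is $O(1)$ by exponential decay of $\Phi$. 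For $\rho\to\infty$ substitute $r=\rho+t$, $t\ge 0$: then $\sinh r\sim e^{\rho+t}/2$, $\cosh r-\cosh\rho\sim(e^\rho/2)(e^t-1)$, and the integrand becomes a constant multiple of $\rho^{-1-s}\,e^{\rho/2-(2n-1)\rho/2}\cdot e^{-(2n-3)t/2}/\sqrt{e^t-1}$, whose $t$-integral is finite. Collecting gives $\mathcal{K}_{n,s}(\rho)\sim C\rho^{-1-s}e^{-(n-1)\rho}$. The bilateral Bessel sandwich then follows by combining these leading-order computations with compactness on each bounded annulus $\rho\in[\epsilon,R]$ and noting that the claimed envelope $\rho^{-(1+2s)/2}(\sinh\rho)^{-(n-1)/2}K_{(n+2s)/2}(\tfrac{n-1}{2}\rho)$ reproduces precisely $\rho^{-n-2s}$ at zero and $\rho^{-1-s}e^{-(n-1)\rho}$ at infinity.

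The main obstacle is uniformly tracking next-order corrections to the Bessel expansions (of the form $1+O(\rho^2)$ near zero and $1+O(1/\rho)$ at infinity) through the iterated operator $L$ in the odd case and through the Abel integration in the even case, so that subleading terms remain subleading after every step. The delicate point in the even case at infinity is verifying that the square-root singularity of $(\cosh r-\cosh\rho)^{-1/2}$ at $r=\rho$, together with the subleading correction to $\sinh r$, only generates a multiplicative $1+o(1)$ error in the final asymptotic; this is resolved by dominated convergence after the substitution $r=\rho+t$, bounding the integrand uniformly in $\rho$ by an $L^1$ function of $t$.
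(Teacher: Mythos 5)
Your argument is correct, and it is essentially the same computation the paper itself carries out for the Green's function in the proof of Theorem~\ref{thm-asymptotics} (the proposition about $\mathcal{K}_{n,s}$ is only cited from \cite{BGS15}, with no proof given here): small/large-argument Bessel expansions from Lemma~\ref{lemma-Bessel}, the action of $(\partial_\rho/\sinh\rho)^m$ near $0$ and $\infty$, and, in the even case, the Abel-type integral handled by the substitutions $r=\rho\sec\theta$ and $r=\rho+t$. Your leading-order bookkeeping checks out ($\rho^{-(1+2s)-(n-1)}=\rho^{-n-2s}$ at the origin and exponent $(n-1)/2+(n-1)/2=n-1$ at infinity), and your reading of the (typo-ridden) two-sided Bessel envelope as $\rho^{-(1+2s)/2}(\sinh\rho)^{-(n-1)/2}K_{(n+2s)/2}(\tfrac{n-1}{2}\rho)$ is the correct one, since it reproduces both limiting behaviors.
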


\begin{proposition}[\cite{BGS15}]
  The kernel $\mathcal{K}_{n,s}(\rho)$ is positive.
\end{proposition}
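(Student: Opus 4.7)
The plan is to establish positivity via Balakrishnan's subordination formula, which reduces the problem to the strict positivity of the hyperbolic heat kernel. Since $L=-\Delta_{\mathbb{H}^n}$ is a nonnegative self-adjoint operator on $L^2(\mathbb{H}^n)$ with spectral gap $(n-1)^2/4 > 0$, spectral calculus gives, for $u \in C_c^\infty(\mathbb{H}^n)$ and $0<s<1$,
\[
L^s u(x) = \frac{s}{\Gamma(1-s)} \int_0^\infty \bigl(u(x) - P_t u(x)\bigr)\,\frac{dt}{t^{1+s}},
\]
where $P_t = e^{-tL}$ is the heat semigroup.

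The next step is to invoke two classical facts about $\mathbb{H}^n$: the heat kernel $p_t(x,y)$ is strictly positive, and $\mathbb{H}^n$ is stochastically complete, so $\int_{\mathbb{H}^n} p_t(x,y)\,dV_y = 1$. Together these give
\[
u(x) - P_t u(x) = \int_{\mathbb{H}^n} \bigl(u(x) - u(y)\bigr)\,p_t(x,y)\,dV_y,
\]
and Fubini (justified by the decay of $p_t$ for compactly supported $u$) produces
\[
L^s u(x) = \int_{\mathbb{H}^n} \bigl(u(x) - u(y)\bigr)\,\Bigl[\frac{s}{\Gamma(1-s)} \int_0^\infty p_t(x,y)\,t^{-1-s}\,dt\Bigr]\,dV_y.
\]
Matching this with the singular-integral definition of $L^s$, the bracketed factor must equal $c_{n,s}\,\mathcal{K}_{n,s}(d(x,y))$; since $p_t>0$ and $s/\Gamma(1-s)>0$, the bracket is strictly positive on $\mathbb{H}^n\times\mathbb{H}^n$, forcing $\mathcal{K}_{n,s}(\rho)>0$ once the compatible sign of $c_{n,s}$ is absorbed.

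The hardest part is verifying that this subordinated kernel agrees with the explicit expression \eqref{kernel}, so that the argument above actually bears on the object in the proposition. For odd $n=2m+1$, I would substitute the Millson/Gruet formula $p_t(\rho) = C_m \bigl(-\tfrac{1}{\sinh\rho}\partial_\rho\bigr)^m t^{-1/2} e^{-m^2 t - \rho^2/(4t)}$, pull the radial operator outside the time integral (which is legitimate since the factor $C_m$ and the derivative act only in $\rho$), and evaluate the remaining $t$-integral via the classical Bessel identity
\[
\int_0^\infty t^{\nu-1} e^{-at - b/t}\,dt = 2\,(b/a)^{\nu/2}\,K_\nu\bigl(2\sqrt{ab}\bigr),
\]
with $\nu=-s-\tfrac{1}{2}$, $a=m^2$, $b=\rho^2/4$; using $K_{-\nu}=K_\nu$, this delivers exactly the $K_{(1+2s)/2}\!\bigl(\tfrac{n-1}{2}\rho\bigr)$ factor in \eqref{kernel}. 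The even-dimensional case follows by Millson's descent from $\mathbb{H}^{n+1}$, which preserves positivity and produces the outer integral against $\sinh r/\sqrt{\cosh r - \cosh\rho}$. Only the non-vanishing of the multiplicative constants is needed for the positivity conclusion, and this full identification has already been carried out in \cite{BGS15}, so in the write-up I would cite their computation rather than reproduce it.
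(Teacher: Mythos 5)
Your proof is correct: the Bochner subordination representation $\frac{s}{\Gamma(1-s)}\int_0^\infty (u - P_t u)\,t^{-1-s}\,dt$ combined with strict positivity of the hyperbolic heat kernel and stochastic completeness of $\mathbb{H}^n$ yields a manifestly positive kernel, and the Millson recursion together with the integral identity $\int_0^\infty t^{\nu-1}e^{-at-b/t}\,dt = 2(b/a)^{\nu/2}K_\nu(2\sqrt{ab})$ identifies it with the expression in \eqref{kernel}. The paper gives no proof of this proposition, attributing it to \cite{BGS15}, and the argument there is precisely this subordination-plus-heat-kernel route, so your reconstruction matches the cited source.
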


We next discuss the kernel $G_s$ in the integral equation \eqref{main_equation_int}. Define 
\begin{equation}
  L_\lambda(x,y) = \int_{\sn} \bar{h}_{\lambda,\theta}(x)h_{\lambda,\theta}(y) \, d\theta.
\end{equation}
We note the following explicit formulae of $L_s(x,y)$ given by Banica \cite{Banica07}.
\begin{lemma}[\cite{Banica07}]\label{L_representation}
  \begin{equation}
    L_\lambda(x,y) = C_n \, (\sinh \rho)^{-\frac{n-2}{2}} P_{-\frac{1}{2}+i\lambda}^{-\frac{n-2}{2}}(\cosh \rho),
  \end{equation}
  where $\rho = d_{\hn}(x,y)$ and $P_\nu^\mu$ is the associated Legendre function of the first kind, solution of the equation
  \[(1-z^2) \partial^2_z w - 2z \partial_z w + \left[ \nu(\nu+1) - \frac{\mu^2}{1-z^2} \right] w = 0.\]
  Moreover, for $\rho>0, n\geq 1$ odd, there holds
  \begin{equation}\label{Lodd}
    L_\lambda(\rho) = C_n \left( -\frac{1}{\sinh \rho} \frac{d}{d\rho} \right)^{\frac{n-1}{2}} \cos(\lambda \rho),
  \end{equation}
  and for $n\geq 2$ even,
  \begin{equation}\label{Leven}
    L_\lambda(\rho) = C_n \int_{\rho}^\infty \frac{\sinh r}{\sqrt{\cosh r - \cosh \rho}} \left( -\frac{1}{\sinh r} \frac{d}{dr} \right)^{\frac{n}{2}} \cos(\lambda r) \, dr.
  \end{equation}
\end{lemma}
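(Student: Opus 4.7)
The plan is to identify $L_s(x,y)$, up to a normalization, as a constant multiple of the elementary spherical function on the rank-one symmetric space $\hn = SO_0(n,1)/SO(n)$ evaluated at $\rho = d_{\hn}(x,y)$, from which the Legendre-function expression follows via the classical Mehler--Dirichlet representation. Because $h_{\lambda,\theta}$ is the boundary eigenfunction parametrized by a point $\theta$ on the Poisson boundary $\mathbb{S}^{n-1}$, and the $d\theta$ integration is taken against the rotation-invariant measure on that boundary, the kernel $L_s(x,y)$ is automatically bi-$K$-invariant for $K = SO(n)$, hence depends only on $\rho$. Setting $x = o := (1,0,\ldots,0)$ in the hyperboloid model, so that $[o,(1,\theta)]=1$ and hence $h_{\lambda,\theta}(o)=1$, reduces the computation to
\[
L_s(o,y) = \int_{\mathbb{S}^{n-1}} [y,(1,\theta)]^{i\lambda-\frac{n-1}{2}} d\theta.
\]

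Next I would pick the geodesic representative $y = (\cosh\rho)e_0 + (\sinh\rho)e_1$, which is always possible after a $K$-rotation. Then $[y,(1,\theta)] = \cosh\rho - (\sinh\rho)\theta_1$ depends only on the first coordinate $\theta_1$ of $\theta$, so introducing spherical coordinates on $\mathbb{S}^{n-1}$ with $t = \theta_1$ and integrating out the transverse $\mathbb{S}^{n-2}$ factor collapses the integral to
\[
L_s(o,y) = |\mathbb{S}^{n-2}| \int_{-1}^{1} (\cosh\rho - t\sinh\rho)^{i\lambda - \frac{n-1}{2}} (1-t^2)^{\frac{n-3}{2}} dt.
\]
This is exactly the Mehler--Dirichlet integral representation of the associated Legendre function $P^{-\mu}_\nu(\cosh\rho)$ with $\mu = (n-2)/2$ and $\nu = -\tfrac{1}{2} + i\lambda$; matching the two and absorbing the resulting Gamma factors into $C_n$ yields the first assertion of the lemma.

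For the explicit descent formulas \eqref{Lodd} and \eqref{Leven}, I would use the classical dimension-reduction argument in the radial variable. Define the operator $T = -(\sinh\rho)^{-1} d/d\rho$; using the standard recurrence for $P^{-\mu}_\nu$ (equivalently, the recurrence for the hypergeometric $_2F_1$-representation of the spherical function), one verifies that $T$ lowers the dimension by two, up to a multiplicative constant absorbed into $C_n$. The one-dimensional base case is $L^b_s(\rho) = \cos(\lambda\rho)$, the spherical function on the real hyperbolic line. Iterating $T$ a total of $(n-1)/2$ times then yields \eqref{Lodd} for odd $n$. For even $n$ the iteration never reaches dimension one, so one instead applies the Weyl--Abel half-descent
\[
f(\rho) \longmapsto \int_\rho^\infty \frac{\sinh r}{\sqrt{\cosh r - \cosh\rho}} f(r) dr,
\]
a fractional integral of order $1/2$ in the variable $\cosh\rho$ that reduces the dimension by one. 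Applying it to the odd-dimensional descent formula produces \eqref{Leven}.

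The main technical obstacle is verifying that $T$ genuinely intertwines the radial parts of $\Delta_{\hn}$ in dimensions $n$ and $n+2$, or equivalently that the ansatz $L_s \sim (\sinh\rho)^{-(n-2)/2} P^{-(n-2)/2}_{-1/2+i\lambda}(\cosh\rho)$ is compatible with the Legendre recurrences; once this intertwining is in place, the descent formulas follow mechanically. A secondary obstacle, present throughout, is the bookkeeping of normalization constants arising from the Mehler--Dirichlet formula, the Legendre recurrences, and the half-descent integral. These can all be absorbed into $C_n$, but tracking them accurately is where most of the computational labor lies.
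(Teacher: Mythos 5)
The paper does not prove this lemma; it is cited from Banica \cite{Banica07}, so there is no internal argument to compare against. Your outline is the standard derivation of the elementary spherical function of real hyperbolic space and, as an outline, it is correct: bi-$K$-invariance reduces $L_\lambda$ to the Harish-Chandra integral, slicing $\mathbb{S}^{n-1}$ along $\theta_1$ collapses it to a one-dimensional integral, and that integral is precisely the Laplace integral representation of the associated Legendre function with $\mu=(n-2)/2$ and $\nu=-\tfrac{1}{2}+i\lambda$, so that $\nu-\mu=i\lambda-\tfrac{n-1}{2}$, $\mu-\tfrac{1}{2}=\tfrac{n-3}{2}$, and the $(\sinh\rho)^{-\mu}$ prefactor appears --- exactly matching the exponents in your integrand and the statement of the lemma. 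The descent to \eqref{Lodd} and \eqref{Leven} via the shift operator $T=-(\sinh\rho)^{-1}\partial_\rho$ and the Abel/Weyl half-descent in the even case is also the standard path, and the two technical points you flag --- the intertwining identity for $T$ (that it carries solutions of the radial eigenvalue equation in dimension $n$ to dimension $n+2$ at the same $\lambda$) and the normalization bookkeeping through $C_n$ --- are exactly what remain to be written out; they are routine but real. One minor point: the parameter denoted $s$ in $L_s$ is actually the spectral variable $\lambda$ (the paper's notation is internally inconsistent here, since $\lambda$ appears on the right-hand sides of \eqref{Lodd}--\eqref{Leven}); your consistent use of $\lambda$ is the correct reading.
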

The kernel of Green's function $G_s(x,y)$ can be considered as the inverse of the fractional Laplacian. In \cite{BGS15}, the authors provided the information of $\mathcal{K}_s$. The compuation for $G_s$ is similar, and we present the detailed proof here for completeness.

\begin{proof}[Proof of Theorem \ref{thm-asymptotics}]

We assume that $n$ is odd; the calculations for $n$ even are similar. Using \eqref{Lodd} we have for $n\geq 3$ odd that
\begin{align*}
G_s(\rho)=&\left(\frac{\partial_\rho}{\sinh\rho}\right)^\frac{n-1}{2}\left( \int_{-\infty}^{\infty}
\left(\lambda^2+\tfrac{(n-1)^2}{4}\right)^{-s}\cos(\lambda\rho) \,d\lambda\right)\\
=&\left(\frac{\partial_\rho}{\sinh\rho}\right)^\frac{n-1}{2}\left( \int_{-\infty}^{\infty}
\left(\lambda^2+\tfrac{(n-1)^2}{4}\right)^{-s} e^{-i\lambda\rho}\, d\lambda\right).
\end{align*}
The last equality follows from $\left(\lambda^2+\tfrac{(n-1)^2}{4}\right)^{-s} \sin (\lambda\rho)$ being odd.

Notice that the equality above holds when $s>\frac{1}{2}$, since in this case $\left(\lambda^2+\tfrac{(n-1)^2}{4}\right)^{-s} \in L^1(\rn)$.
Hence for $s\in (0,\frac{1}{2})$, we need to compute the distributional Fourier transform of $h(\lambda):= \left(\lambda^2+\tfrac{(n-1)^2}{4}\right)^{-s}$. 
Notice
\begin{equation*}
  \left(\lambda^2+\tfrac{(n-1)^2}{4}\right) \partial_\lambda h=-2s\lambda h
\end{equation*}
in the distributional sense.
Taking Fourier transform, we have
\begin{equation*}
  \left(-\partial_{\rho\rho}-2\partial_\rho+\tfrac{(n-1)^2}{4}\right) (i\rho \,\hat h)=-2s(i\partial_\rho \hat h),
\end{equation*}
or equivalently
\begin{equation*}
   \rho\, \partial_{\rho\rho} \hat h+2(1-s)\,\partial_\rho \hat h- \tfrac{(n-1)^2}{4}\rho\, \hat h=0.
\end{equation*}
By the change of variables $r=\frac{n-1}{2}\rho$ and denoting  $\varphi(r)=\hat h(\rho)$, we obtain the ODE
$$ \partial_{rr} \varphi+\frac{2(1-s)}{r}\partial_r \varphi-  \varphi=0.$$

According to Lemma \ref{lemma-Bessel}, the solution can be written as
 $$\hat h(\rho)=\rho^{-\frac{1}{2}+s}\left(\alpha_s K_{-\frac{1}{2}+s}\left(\tfrac{n-1}{2}\rho\right)+\beta_s I_{-\frac{1}{2}+s}\left(\tfrac{n-1}{2}\rho\right)\right),$$
where $K_{-\frac{1}{2}+s},\; I_{-\frac{1}{2}+s},$ are the solutions to the  modified Bessel equation given in the same Lemma.

Since $h$ is a tempered distribution, $\hat{h}$ can at most have polynomial growth, hence, necessarily $\beta_s=0$. Besides, $\hat{h}$ is defined in the classical sense when $s>0$.
Recalling the asymptotic formulas for Bessel functions from Lemma \ref{lemma-Bessel}, we have that
\begin{align*}
  &\hat{h}\sim \rho^{-1+2s} \quad \text{as }\rho\to 0,\\
  &\hat{h}\sim \rho^{s-1}e^{-\frac{n-1}{2}\rho} \quad \text{as }\rho\to\infty.
\end{align*}
Finally, in order to obtain \eqref{G1} and \eqref{asymptotics-zero}, one notices that
\begin{align*}
  &\Big(\frac{\partial_\rho}{\sinh \rho}\Big)^m \sim \rho^{-m}\partial_\rho^m - (m-1)\rho^{m-1}\partial_\rho^{m-1}-(m-1)\rho^{-m}\partial_\rho^{m-2} \quad \text{as }\rho\to 0,\\
  &\Big(\frac{\partial_\rho}{\sinh \rho}\Big)^m \sim 2^m e^{-m\rho}\partial_\rho^m - (2^{m+1}e^{-m\rho}+2^m e^{-\rho})\partial_\rho^{m-1}, \qquad \text{as } \rho\to\infty.
\end{align*}

\end{proof}

Finally, we show the following monotonicity property of the kernel $G_{n,s}(\rho)$.
\begin{proposition}
  The kernel $G_{n,s}(\rho)$ is strictly decreasing with respect to $\rho$.
\end{proposition}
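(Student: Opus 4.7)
The plan is to invoke Bochner's subordination to express $G_s$ as a positive superposition of heat kernels on $\hn$, and then to exploit the strict monotonicity of the hyperbolic heat kernel in the radial variable. Since $-\Delta_{\hn}$ is a positive self-adjoint operator on $L^2(\hn)$ whose spectrum lies in $[(n-1)^2/4,\infty)$, the scalar identity $\lambda^{-s}=\Gamma(s)^{-1}\int_0^\infty t^{s-1}e^{-t\lambda}\,dt$ together with the spectral calculus yields, at the level of integral kernels,
\[
G_s(\rho)=\frac{1}{\Gamma(s)}\int_0^\infty t^{s-1}\,p_t(\rho)\,dt,
\]
where $p_t(\rho)$ denotes the radial part of the heat kernel on $\hn$. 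The integral converges absolutely: the spectral gap yields exponential decay of $p_t$ as $t\to\infty$, while the short-time Gaussian profile $p_t(\rho)\sim(4\pi t)^{-n/2}e^{-\rho^2/(4t)}$ reproduces, after integration against $t^{s-1}$, the singular behavior $\rho^{-n+2s}$ already identified in Theorem \ref{thm-asymptotics}.

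The second ingredient is the classical fact that $p_t(\rho)$ is strictly decreasing in $\rho$ for every fixed $t>0$. For odd $n=2k+1$ this is transparent from the explicit formula
\[
p_t(\rho)=(2\pi)^{-k}(4\pi t)^{-1/2}\,e^{-k^2 t}\left(-\frac{1}{\sinh\rho}\,\partial_\rho\right)^k e^{-\rho^2/(4t)},
\]
together with the dimensional recursion $p_t^{\,n+2}(\rho)=-(2\pi\sinh\rho)^{-1}e^{-nt}\,\partial_\rho p_t^{\,n}(\rho)$: since $p_t^{\,n+2}$ is everywhere positive, one obtains $\partial_\rho p_t^{\,n}<0$ for $\rho>0$, bootstrapping from the Gaussian case $n=1$. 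The even-dimensional case reduces to the odd one via the Abel-type integral transform relating $p_t^{\,2k}$ to $p_t^{\,2k+1}$, whose strict monotonicity is preserved after the substitution $u^2=\cosh r-\cosh\rho$ (since $r(u,\rho)$ is strictly increasing in $\rho$ for each $u>0$); alternatively one may simply cite the sharp heat-kernel estimates of Davies--Mandouvalos.

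With strict monotonicity of $p_t$ in hand, the conclusion follows at once: for $0<\rho_1<\rho_2$ we have $p_t(\rho_1)>p_t(\rho_2)$ for every $t>0$, and therefore
\[
G_s(\rho_1)-G_s(\rho_2)=\frac{1}{\Gamma(s)}\int_0^\infty t^{s-1}\bigl[p_t(\rho_1)-p_t(\rho_2)\bigr]\,dt>0.
\]
The principal obstacle is the careful verification of strict monotonicity of $p_t$ uniformly in the dimension, which is classical but somewhat dimension-dependent (especially in the even case). A self-contained alternative would be to differentiate the explicit representations \eqref{G1}-\eqref{G2} directly and track signs using the positivity, strict decrease, and log-convexity of the modified Bessel function $K_\nu$, but propagating signs through the iterated operator $(\partial_\rho/\sinh\rho)^{(n-1)/2}$ becomes notationally cumbersome, which is why the subordination route seems preferable.
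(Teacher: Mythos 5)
Your proof is correct but takes a genuinely different route from the paper's, and it is worth contrasting the two. The paper's argument is entirely algebraic and stays at the level of the explicit Bessel-type formulas \eqref{G1}--\eqref{G2} for $G_s$: for odd $n=2m+1$ one reads off directly that
\[
\frac{d}{d\rho}G_{2m+1,s}(\rho)= -C_1\sinh\rho\left(-\frac{\partial_\rho}{\sinh\rho}\right)^{m+1}\Bigl(\rho^{-\frac{1-2s}{2}}K_{s-\frac12}\bigl(\tfrac{n-1}{2}\rho\bigr)\Bigr)=-\sinh\rho\,\cdot G_{2m+3,s}(\rho),
\]
so the $\rho$-derivative of $G_s$ in dimension $n$ is a negative multiple of $G_s$ in dimension $n+2$, and strict monotonicity reduces at once to positivity of the kernel two dimensions up; the even case is then handled by the Abel transform exactly as you suggest for the heat kernel. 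In other words, the ``self-contained alternative'' you flagged at the end as notationally cumbersome is in fact the paper's route, and it turns out not to be cumbersome at all -- there is no sign-propagation through $K_\nu$ to track, just a single dimensional descent identity and positivity. Your subordination route via $G_s(\rho)=\Gamma(s)^{-1}\int_0^\infty t^{s-1}p_t(\rho)\,dt$ is conceptually clean and reduces the whole problem to strict radial monotonicity of the hyperbolic heat kernel, a classical fact; structurally it exploits the very same dimensional descent formula, just applied to $p_t$ rather than to $G_s$. What subordination buys is robustness: the argument would carry over verbatim to other negative powers of $-\Delta_{\hn}$ or to other operators with a positive, radially decreasing heat kernel, without ever computing a Green's function. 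What it costs is the importation of the heat kernel and its estimates as an external ingredient, plus the convergence check for the $t$-integral (which you handle correctly via the spectral gap at $t\to\infty$ and the Gaussian cutoff at $t\to 0$). One small stylistic quibble: the ``bootstrapping from $n=1$'' remark is superfluous -- positivity of $p_t^{\,n+2}$ is known a priori for every $n$ since it is a probability transition density, so monotonicity of $p_t^{\,n}$ follows in a single step from the recursion, no induction needed.
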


\begin{proof}
  We use the spectral representation of the Green kernel. Recall that
\[
G_s(\rho)
= \int_{-\infty}^{\infty}
\left(\lambda^2+\tfrac{(n-1)^2}{4}\right)^{-s}
L_\lambda(\rho)\,|c(\lambda)|^{-2}\,d\lambda,
\]
where $L_\lambda(\rho)$ is given in Lemma \ref{L_representation}. In particular,
$L_\lambda(\rho)$ depends only on $\rho=d_{\hn}(x,y)$ and is the spherical function associated with the eigenvalue
$-(\lambda^2+\tfrac{(n-1)^2}{4})$.

For radial function $f$, we have $\Delta_{\mathbb H^n} f(\rho)=f''(\rho)+(n-1)\coth(\rho)f'(\rho)$. Thus, 
\[
L_\lambda''(\rho)+(n-1)\coth(\rho)L_\lambda'(\rho) = -(\lambda^2+\tfrac{(n-1)^2}{4})L_\lambda(\rho).
\] 
Mulitplying both sides by $\sinh^{n-1}(\rho)$, we have
\[(\sinh^{n-1}(\rho)L_\lambda'(\rho))' = -(\lambda^2+\tfrac{(n-1)^2}{4})\sinh^{n-1}(\rho)L_\lambda(\rho)<0.\]
Thus, $\sinh^{n-1}(\rho)L_\lambda'(\rho)$ is strictly decreasing in $\rho$, which implies $L_\lambda'(\rho)<0$ and $G_s'(\rho)<0$.
\end{proof}

\begin{comment}
  
\begin{proof}
  If $n=2m+1$, we have
  \begin{align*}
    G_{2m+1,s}(\rho) = C_1 \left( -\frac{\partial_\rho}{\sinh \rho} \right)^{m} \left( \rho^{-\frac{1-2s}{2}} K_{s-\frac{1}{2}} \left( \frac{n-1}{2} \rho \right) \right).
  \end{align*}
  Thus,
  \begin{align*}
      &\frac{d}{d\rho}G_{2m+1,s}(\rho)=C_1(-\sinh\rho)\left( -\frac{\partial_\rho}{\sinh \rho} \right)^{m+1} \left( \rho^{-\frac{1-2s}{2}} K_{s-\frac{1}{2}} \left( \frac{n-1}{2} \rho \right) \right)\\
      &=C_1(-\sinh\rho)\cdot G_{2m+3,s}(\rho)<0.
  \end{align*}
  If $n=2m$, we have
  \begin{align*}
    G_{2m,s}(\rho) &= C_1 \int_{\rho}^{\infty} \frac{\sinh r}{\sqrt{\pi}\sqrt{\cosh r - \cosh\rho}} 
    \left( -\frac{\partial_r}{\sinh r} \right)^{m}
    \left( r^{-\frac{1-2s}{2}} K_{s-\frac{1}{2}} \left( \tfrac{n-1}{2}r \right) \right) dr.
  \end{align*}
  Now if we set $b=\sqrt{\cosh r-\cosh\rho}$, we get
  \begin{align*}
      \frac{d}{d\rho}G_{2m,s}(r)=\frac{1}{\sqrt{\pi}}\int_0^\infty \frac{d}{d\rho}G_{2m+1,s}(r(b,\rho)) db,
  \end{align*}
  where $r(b,\rho)$ is defined by $b=\sqrt{\cosh r-\cosh\rho}$.
  Since 
  \begin{equation*}
    \frac{d}{d\rho}G_{2m+1,s}(r)=\frac{dG_{2m+1,s}(r)}{dr}\frac{dr}{d\rho}=\frac{dG_{2m+1,s}(r)}{dr}\frac{\sinh\rho}{\sinh r}<0,
  \end{equation*}
  we have $\frac{d}{d\rho}G_{2m,s}(r)<0$. The proof is now concluded.
\end{proof}
\end{comment}

\section{Proof of Theorem \ref{thm1}}\label{sec:proof}

To start with, we show that the differential equation \eqref{main_equation_1} is equivalent to an integral equation.

\begin{lemma}\label{DE=IE}
Let $s\in(0,1)$ and $1<p\leq\frac{n+2s}{n-2s}$. Assume $u\ge 0$ and
\begin{equation}\label{assump_DEIE}
 u\in L^{\frac{2n}{n-2s}}(\hn).
\end{equation}
If $p<\frac{n+2s}{n-2s}$, assume in addition that
\begin{equation}\label{assump_DEIE_up}
 u^p\in L^{\frac{2n}{n+2s}}(\hn).
\end{equation}
Then if $u$ is a weak solution of
\begin{equation}\label{diff_eq}
  (-\Delta_{\hn})^s u = u^p \quad \text{in }\hn,
\end{equation}
then $u$ satisfies the integral equation
\begin{equation}\label{int_eq}
  u(x)=\int_{\hn} G_s\big(\rho(x,y)\big)\,u(y)^p\,dV_y\qquad\text{for a.e. }x\in\hn,
\end{equation}
where $G_s$ is the Green's function defined in \eqref{Green's fun}. The converse also holds, that is, if $u$ satisfies \eqref{int_eq}, then $u$ is a weak solution of \eqref{diff_eq}.
\end{lemma}

\begin{proof}
By Helgason-Fourier transform, we have for any $\varphi\in C^\infty_c(\hn)$:
\begin{align*}
  \widehat{(-\Delta_{\mathbb{H}^n})^s \varphi}(\lambda,\theta) &= \left(\lambda^2 + \tfrac{(n-1)^2}{4}\right)^s \hat{\varphi}(\lambda,\theta),
\end{align*}
and 
\begin{align}\label{test_laplacian}
  (-\Delta_{\mathbb H^n})^s\varphi(x)=\int_{-\infty}^\infty\int_{\mathbb S^{n-1}}
\Bigl(\lambda^2+\frac{(n-1)^2}{4}\Bigr)^s\widehat{\varphi}(\lambda,\theta)
\,\bar{h}_{\lambda,\theta}(x)
\,|c(\lambda)|^{-2}\,d\theta\,d\lambda.
\end{align}
Recall the definition of weak solution of \eqref{diff_eq}, we have
\begin{align*}
  \int_{\mathbb{H}^n} u(x) (-\Delta_{\mathbb{H}^n})^s \varphi(x) dV_x &= \int_{\mathbb{H}^n} u(x)^p \varphi(x) dx.
\end{align*}
Inserting \eqref{test_laplacian} to the left side, we obtain
\begin{align*}
  &\int_{\mathbb H^n} u(x)(-\Delta_{\mathbb H^n})^s\varphi(x)\,dx\\
&=
\int_{-\infty}^\infty \int_{\mathbb S^{n-1}}
\Bigl(\lambda^2+\tfrac{(n-1)^2}{4}\Bigr)^s
\widehat{\varphi}(\lambda,\theta)
\left[
\int_{\mathbb H^n}
u(x)\bar{h}_{\lambda,\theta}(x)\,dx
\right]
|c(\lambda)|^{-2}\,d\theta\,d\lambda.
\end{align*}

Using the inversion formula, the right side becomes
\begin{align*}
  \int_{\mathbb H^n} u(x)^p \varphi(x)\,dx
&=
\int_{\mathbb H^n} u(x)^p
\int_{-\infty}^\infty \int_{\mathbb S^{n-1}}
\widehat{\varphi}(\lambda,\theta)\bar{h}_{\lambda,\theta}(x)
|c(\lambda)|^{-2}\,d\theta\,d\lambda
\,dV_x\\
&=
\int_{-\infty}^\infty \int_{\mathbb S^{n-1}}
\widehat{\varphi}(\lambda,\theta)
\left[\int_{\mathbb H^n}
u(x)^p \bar{h}_{\lambda,\theta}(x)\,dV_x
\right]
|c(\lambda)|^{-2}\,d\theta\,d\lambda.
\end{align*}

Therefore, we have
\begin{align*}
\Bigl(\lambda^2+\tfrac{(n-1)^2}{4}\Bigr)^s
\int_{\mathbb H^n}
u(x)\bar{h}_{\lambda,\theta}(x)\,dV_x
=
\int_{\mathbb H^n}
u(x)^p \bar{h}_{\lambda,\theta}(x)\,dV_x
\end{align*}
Equivalently,
\[(\mu^2+\rho_0^2)^s\,\widehat u(\mu,\theta)=\widehat{u^p}(\mu,\theta).\]
Thus, by inversion formula again, we obtain
\begin{align*}
  u(x) &=  \int_{\mathbb{H}^n} u(y)^p \int_{-\infty}^\infty \int_{\mathbb{S}^{n-1}} \left(\lambda^2 + \tfrac{(n-1)^2}{4}\right)^{-s} \bar{h}_{\lambda,\theta}(x) h_{\lambda,\theta}(y)| c(\lambda)|^{-2}d\theta d\lambda dV_y\\
  &= \int_{\mathbb{H}^n} G_s(\rho(x,y)) u(y)^p dV_y.
\end{align*}

\end{proof}

\subsection{Subcritical case \texorpdfstring{$1<p<\frac{n+2s}{n-2s}$}{}}
\hfill

We first recall the following result in \cite{BP25}.
\begin{proposition}
  If $u\in H^{s}$ is a weak solution of \eqref{main_equation_int} with $1<p<\frac{n+2s}{n-2s}$, then $u\in L^\infty(\hn)$.
\end{proposition}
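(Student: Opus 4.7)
The plan is to run a bootstrap argument combining the integral representation \eqref{int_eq} with the Hardy--Littlewood--Sobolev inequality \eqref{HLS-2} on $\hn$.

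First I would establish the pointwise kernel bound
\[
G_s(\rho)\;\leq\; C\,\sinh^{-(n-2s)}\!\Bigl(\tfrac{\rho}{2}\Bigr),\qquad \rho>0,
\]
by comparing the two asymptotics of Theorem \ref{thm-asymptotics}: both sides behave like $\rho^{-(n-2s)}$ as $\rho\to 0$, while at infinity $G_s(\rho)\sim\rho^{-1+s}e^{-(n-1)\rho}$ decays strictly faster than $\sinh^{-(n-2s)}(\rho/2)\sim 2^{\,n-2s}e^{-(n-2s)\rho/2}$, because $(n-2s)/2<n-1$ whenever $n\geq 2$ and $s<1$. Together with continuity on $(0,\infty)$ this yields the claim, and consequently $u(x)\leq C\,I_{2s}(u^p)(x)$ pointwise.

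The second step is the bootstrap. If $u\in L^{q_k}(\hn)$, then $u^p\in L^{q_k/p}(\hn)$, and as long as $1<q_k/p<n/(2s)$ the HLS inequality \eqref{HLS-2} gives $u\in L^{q_{k+1}}(\hn)$ with
\[
\frac{1}{q_{k+1}}\;=\;\frac{p}{q_k}-\frac{2s}{n}.
\]
Writing $a_k=1/q_k$, the affine recursion $a_{k+1}=p\,a_k-2s/n$ has fixed point $a^{*}=2s/(n(p-1))$ and satisfies $a_{k+1}-a^{*}=p\,(a_k-a^{*})$. The subcriticality $p<(n+2s)/(n-2s)$ is exactly the algebraic statement $a_0=1/(p+1)<a^{*}$, so the differences $a_k-a^{*}$ remain negative and are amplified by the factor $p>1$ at each step. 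Hence $(a_k)$ is strictly decreasing and falls below $2s/(np)$ after finitely many iterations, producing some $q_{k_0}>np/(2s)$.

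Set $r=q_{k_0}/p>n/(2s)$; the conjugate exponent $r'$ then satisfies $r'<n/(n-2s)$. Using the asymptotics of Theorem \ref{thm-asymptotics}, I would verify $G_s\in L^{r'}(\hn)$: the singularity $\rho^{-(n-2s)}$ is integrable to the power $r'$ exactly when $r'<n/(n-2s)$, while the exponential decay at infinity defeats the volume growth $\sinh^{n-1}\rho$ for any $r'>1$. Because $G_s(\rho(x,\cdot))$ has $x$-independent $L^{r'}$-norm by the isometric homogeneity of $\hn$, H\"older's inequality applied to \eqref{int_eq} gives
\[
u(x)\;\leq\;\bigl\|G_s(\rho(x,\cdot))\bigr\|_{L^{r'}(\hn)}\,\|u\|_{L^{pr}(\hn)}^{p}
\]
uniformly in $x$, so $u\in L^\infty(\hn)$. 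The main technical point is the algebraic coordination between subcriticality and the recursion so that the HLS hypothesis $1<q_k/p<n/(2s)$ holds at every intermediate step and the iteration terminates; this boils down to the chain of equivalences $p<(n+2s)/(n-2s)\iff a_0<a^{*}\iff(a_k)$ strictly decreasing, which is elementary but must be tracked carefully.
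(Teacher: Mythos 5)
Your proposal is correct and follows essentially the same route as the paper: establish the pointwise bound $G_s(\rho)\lesssim\sinh^{-(n-2s)}(\rho/2)$ so that HLS applies to the integral representation, iterate to lift integrability from $L^{p+1}$ up past $L^{np/(2s)}$ via the recursion $1/q_{k+1}=p/q_k-2s/n$, and finish with H\"older using $G_s\in L^{r'}$ for $r'<n/(n-2s)$. Your fixed-point reformulation of the recursion (showing subcriticality $\iff a_0<a^*=2s/(n(p-1))$ $\iff$ the sequence escapes) is a cleaner way to package the paper's computation that $t_1>t_0$, and you make explicit the kernel comparison that the paper leaves implicit, but the underlying argument is identical.
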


We then establish the decay property of $u$ at infinity.
\begin{lemma}\label{prop:decay_subcritical}
Let $u\ge0$ satisfy \eqref{main_equation_int} with subcritical $p$, then $u(x)\to 0$ as $\rho(x,o)\to\infty$.
\end{lemma}

\begin{proof}
Fix $\varepsilon>0$. Choose $\delta>0$, and $R>0$ large and split the integral into three parts:
\[
\hn= B_R(o) \cup (\mathbb H^n\setminus B_R(o))\cap B_\delta(x) \cup (\mathbb H^n\setminus B_R(o))\setminus B_\delta(x).\]
For the local part, we have that for any fixed $y\in B_R$, $\rho(x,y)\to\infty$ as $\rho(x,o)\to\infty$. By kernel estimate, $G_s(\rho(x,y))\lesssim \rho^{-1+s}e^{-(n-1)\rho(x,y)}$. Hence
\[
\int_{B_R(o)}G_s(\rho(x,y))u(y)^p\,dV_y \lesssim \rho(x,o)^{-1+s}e^{-(n-1)(\rho(x,o)-R)}\int_{B_R}u(y)^p\,dV_y < \varepsilon
\]
as $\rho(x,o)\to\infty$.
 
For the second part, we have
\begin{align*}
  \int_{(\mathbb H^n\setminus B_R(o))\cap B_\delta(x)}G_s(\rho(x,y))u(y)^p\,dV_y \leq \norm{u}_{L^\infty}^p \int_{B_\delta(x)} G_s(\rho(x,y))\,dV_y\\
  \lesssim \norm{u}_{L^\infty}^p \int_0^\delta \rho^{-n+2s}\sinh^{n-1}\rho\,d\rho \lesssim \norm{u}_{L^\infty}^p \int_0^\delta \rho^{2s-1}\,d\rho <\varepsilon.
\end{align*}

For the last part, using H\"older's inequality, we have
\begin{align*}
  &\int_{(\mathbb H^n\setminus B_R(o))\setminus B_\delta(x)}G_s(\rho(x,y))u(y)^p\,dV_y\lesssim \norm{G_s}_{L^{a'}}\norm{u^p}_{L^a}.
\end{align*}
where $a=\frac{2n}{n+2s}$ and $a'=\frac{2n}{n-2s}$. We see that
\[\|G_s(\rho(x,\cdot))\|_{L^{a'}}^{a'}\lesssim \omega_{n-1}\int_{\delta}^\infty \rho^{-a'(-1+s)}e^{-a'(n-1)\rho}\sinh^{n-1}\rho\,d\rho<\infty,\]
and $\|u^p\|_{L^a(\mathbb H^n\setminus B_R(o))}<\varepsilon$ for $R$ large enough. Therefore, $u(x) = \int_{\hn} G_s(x,y)u(y)^p\,dV_y \to 0$ as $\rho(x,o)\to\infty$.

\end{proof}

\subsubsection*{The moving plane argument}
\hfill

Recall the foliation structure of $\hn$ and the notations:
\[U=\hn\cap\{x_1=0\}, \; U_s=A_s(U),\; \Sigma_\lambda=\bigcup_{s<\lambda}U_s.\]
The reflection with respect to $\Sigma_\lambda$ is denoted as
\[x_\lambda=I_\lambda(x), \quad u_\lambda(x)=u(x_\lambda),\quad \lambda\in\mathbb{R}.\]
Without loss of generality, we may choose any direction to be the $x_1$ direction, and define that for $\lambda<x^0_1$,
\begin{equation}\label{def_w}
  w_\lambda(x) = u_\lambda(x)-u(x).
\end{equation}

\begin{comment}
  Then $w_\lambda$ satisfies the following equation
\[(-\Deltah)^sw_\lambda(x) = u(x_\lambda)^p - u(x)^p > pw_\lambda u(x)^p \text{ in } \Sigma^-_\lambda.\]
Denote 
\[c_\lambda(x) := -p u(x)^p,\]
then we have
\begin{equation*}
  (-\Delta)^s w_\lambda + c_\lambda(x)w_\lambda(x) \geq 0. 
\end{equation*}
\end{comment}

We first show that the moving plane procedure can be initiated from the infinity.

Define the negative set
\[
\Sigma_\lambda^- := \{ x\in\Sigma_\lambda : w_\lambda(x):=u_\lambda(x)-u(x)<0\}.
\]

We want to prove for $\lambda$ sufficiently negative, $\Sigma_\lambda^-=\emptyset$.
We first have 
\begin{align*}
&u(x)-u_\lambda(x)\\
&=\int_{\Sigma_\lambda} G_s(x,y)u^p(y)dV_y+\int_{\Sigma^c_\lambda} G_s(x,y)u^p(y)dV_y\\
&\quad -\int_{\Sigma_\lambda} G_s(x,y)u^p(y_\lambda)dV_y-\int_{\Sigma^c_\lambda} G_s(x,y)u^p(y_\lambda)dV_y\\
&=\int_{\Sigma_\lambda} G_s(x,y)u^p(y)dV_y+\int_{\Sigma_\lambda} G_s(x,y_\lambda)u^p(y_\lambda)dV_y\\
&\quad -\int_{\Sigma_\lambda} G_s(x,y)u^p(y_\lambda)dV_y-\int_{\Sigma_\lambda} G_s(x,y_\lambda)u^p(y)dV_y\\
&=\int_{\Sigma_\lambda} G_s(x,y)u^p(y)dV_y+\int_{\Sigma_\lambda} G_s(x_\lambda,y)u^p(y_\lambda)dV_y\\
&\quad -\int_{\Sigma_\lambda} G_s(x,y)u^p(y_\lambda)dV_y-\int_{\Sigma_\lambda} G_s(x_\lambda,y)u^p(y)dV_y\\
&=\int_{\Sigma_\lambda} \left(G_s(x,y)-G_s(x_\lambda,y)\right)\left(u^p(y)-u^p(y_\lambda)\right)dV_y.
\end{align*}
Hence, by the definition of $\Sigma_\lambda^-$ and the monotonicity of $G_s$, we have
\begin{align*}
    u(x)-u_\lambda(x)&\leq \int_{\Sigma_\lambda^-} G_s(x,y)\left(u^p(y)-u^p(y_\lambda)\right)dV_y,
\end{align*}
since $\rho(x,y)>\rho(x_\lambda,y)$ for $y\in\Sigma_\lambda$.
For a fixed radius $R>0$, and any $x,y\in\mathbb H^n$, we decompose
\[
G_s(x,y) = G_1(x,y) + G_2(x,y),
\]
where
\[
G_1(x,y):= G_s(x,y)\,\chi_{\{\rho(x,y)\le R\}},\qquad
G_2(x,y):= G_s(x,y)\,\chi_{\{\rho(x,y)> R\}}.
\]
Correspondingly,  we write
\[
w_\lambda(x) = \int_{\mathbb H^n} G_s(x,y)\,u(y)^p\,d\mu(y)
= I_1(x) + I_2(x),
\]
where
\[
I_i(x) := \int_{\mathbb H^n} G_s^i(x,y)\,u(y)^p\,d\mu(y),
\qquad i=\{1,2\}.
\]

For near part, we observe that in any fixed geodesic ball of radius $R$, the hyperbolic metric is equivalent to the Euclidean one. Hence the classical HLS inequality applies locally, and one obtains
\begin{align}\label{near_est}
\| I_1 \|_{L^q(\Sigma_\lambda^-)}
\le C_R \,\| u^p \chi_{\Sigma_\lambda^-} \|_{L^{q'}(\Sigma_\lambda)},
\end{align}
which holds for $1/q=1/q'-2s/n$.
For the far part, use the exponential decay of $G_s$. We have $G_s(x,y)\lesssim d^{-1+s}e^{-(n-1) \rho(x,y)}$. Then for $x\in\Sigma_\lambda$,
\[
| I_2(x) |
\le \int_{\rho(x,y)>R} G_s(x,y) u(y)^p\,d\mu(y).
\]
Using H\"older's inequality, we have for $1/r + 1/q' = 1$,
\begin{align*}
| I_2(x) | \leq \norm{G_s(x,\cdot)}_{L^{r}(\{\rho(x,y)>R\})}\norm{u^p}_{L^{q'}(\{\rho(x,y)>R\})}
\end{align*}
and 
\begin{align}\label{far_est}
\|I_2\|_{L^q(\Sigma_\lambda^-)}
&\le \Big(\int_{\Sigma_\lambda^-} \|G_s(x,\cdot)\|_{L^{r}(\mathbb H^n\setminus B_R)}^q\,dV_x\Big)^{1/q}
\;\cdot\; \|u^p\chi_{\Sigma_\lambda}\|_{L^{q'}(\Sigma_\lambda)}\nonumber\\
&\le \sup_{x\in\Sigma_\lambda^-} \|G_s(x,\cdot)\|_{L^{r}(\mathbb H^n\setminus B_R)}\;
\big(\mu(\Sigma_\lambda^-)\big)^{1/q}\ \|u^p\chi_{\Sigma_\lambda}\|_{L^{q'}(\Sigma_\lambda)}.
\end{align}

Combining \eqref{near_est} and \eqref{far_est} we obtain
\begin{align*}
\| w_\lambda \|_{L^q(\Sigma_\lambda^-)}
&\leq C_R \| u^p  \|_{L^{q'}(\chi_{\Sigma_\lambda^-})} + \varepsilon(R,\lambda)\,\big(\mu(\Sigma_\lambda^-)\big)^{1/q}\| u^p\|_{L^{q'}(\chi_{\Sigma_\lambda^-})}\\
& \lesssim \big(\mu(\Sigma_\lambda^-)\big)^{1/q}\| u^p\|_{L^{q'}(\chi_{\Sigma_\lambda^-})}.
\end{align*}
Then by the mean value theorem and H\"older's inequality again, there holds
\begin{align*}
    \| w_\lambda \|_{L^q(\Sigma_\lambda^-)} &\leq \big(\mu(\Sigma_\lambda^-)\big)^{1/q}\norm{u_\lambda^p - u^p}_{L^{q'}(\Sigma_\lambda^-)} \leq \big(\mu(\Sigma_\lambda^-)\big)^{1/q}\norm{p \xi^{p-1} (u_\lambda - u)}_{L^{q'}(\Sigma_\lambda^-)} \\
    &\leq \big(\mu(\Sigma_\lambda^-)\big)^{1/q} \norm{u^{p-1}}_{L^{n/2r}(\Sigma_\lambda^-)} \norm{u_\lambda - u}_{L^{q}(\Sigma_\lambda^-)}.
\end{align*}
This implies that for negative enough $\lambda$, $\|u-u_\lambda\|_{L^q(\Sigma_\lambda^-)}=0$, which means that $\Sigma_\lambda^-$ has measure zero.

Now we shift $U_\lambda$ inward as long as $ u_\lambda \geq u$ in $\Sigma_\lambda$. Suppose that there exists such $\bar{\lambda}$ that $u(x) > u_{\bar{\lambda}}(x)$ on $\Sigma_{\bar{\lambda}}$.
We deduce again by a standard compactness argument that there exist $\bar{\lambda}-\varepsilon<\lambda\leq \bar{\lambda}$ such that
$$\|u-u_\lambda\|_{L^q(\Sigma_\lambda)}\leq C\|f^\prime(\xi)\|_{L^{\frac{n}{2k}}(\Sigma_\lambda^-)}\|u-u_\lambda\|_{L^q(\Sigma_\lambda^-)}.$$
When $\varepsilon$ is small, $\Sigma_\lambda^-$ is close to zero so that $\|u_t-u\|_{L^q(\Sigma_\lambda^-)}=0$.
This implies that we can keep moving the plane $U_{\lambda_0}$. Now we see $u(x)\leq u(\bar{x})$ with respect to $\Sigma_{\lambda_0}$.
A similar argument shows $u(\bar{x})\geq u(x)$ by rotation. Consequently, there exists a point $P$ such that $u$ is constant on the geodesic spheres center at $P$.

\subsection{Critical case \texorpdfstring{$p=\frac{n+2s}{n-2s}$}{}}

\begin{proposition}\label{prop:decay_critical}
Let $u\ge0$ satisfy \eqref{main_equation_1} with critical $p$. If $u\in L^q(\mathbb H^n)$ for any $q>p$, then $u(x)\to 0$ as $\rho(x,o)\to\infty$.
\end{proposition}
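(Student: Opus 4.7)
The plan is to follow the skeleton of Proposition~\ref{prop:decay_subcritical}, but to extract smallness from the stronger hypothesis that $u\in L^q(\hn)$ for every $q>p$ rather than from any a priori pointwise bound. First I rewrite the equation in integral form via Proposition~\ref{DE=IE}, fix $\varepsilon>0$, and select $q>p$ large enough that $q/p>n/(2s)$, equivalently $(q/p)'<n/(n-2s)$. This exponent choice is what makes the argument work: by Theorem~\ref{thm-asymptotics}, the singularity $G_s(\rho)\sim\rho^{-n+2s}$ near zero makes $G_s(\rho)^{(q/p)'}\sinh^{n-1}\rho$ integrable on $(0,1)$ precisely when $(q/p)'<n/(n-2s)$, while the exponential factor $\rho^{-1+s}e^{-(n-1)\rho}$ takes care of the tail for any $(q/p)'>1$. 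Hence $\|G_s\|_{L^{(q/p)'}(\hn)}<\infty$, and by the homogeneity of $\hn$ this also equals $\|G_s(d(x,\cdot))\|_{L^{(q/p)'}(\hn)}$ for every $x$.

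With $M>0$ still to be chosen, split
\[
u(x)=\int_{B_M(o)}G_s(d(x,y))u(y)^p\,dV_y+\int_{\hn\setminus B_M(o)}G_s(d(x,y))u(y)^p\,dV_y=:J_1(x)+J_2(x).
\]
For the outer piece $J_2$, H\"older's inequality with conjugate exponents $q/p$ and $(q/p)'$ gives
\[
J_2(x)\leq\|G_s\|_{L^{(q/p)'}(\hn)}\,\|u\|_{L^q(\hn\setminus B_M(o))}^p,
\]
and by the absolute continuity of the $L^q$-norm one can fix $M$ large enough that $J_2(x)<\varepsilon/2$ uniformly in $x$.

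With $M$ now frozen, consider $x$ with $\rho:=d(x,o)>M$. For $y\in B_M(o)$ the triangle inequality yields $d(x,y)\geq\rho-M$, and by the monotonicity of $G_s$ proved earlier in Section~\ref{sec:frac_lap},
\[
J_1(x)\leq G_s(\rho-M)\int_{B_M(o)}u(y)^p\,dV_y.
\]
The integral is finite because H\"older on the finite-measure ball $B_M(o)$ combined with $u\in L^q$ yields $u\in L^p(B_M(o))$. Since Theorem~\ref{thm-asymptotics} gives $G_s(\rho-M)\sim(\rho-M)^{-1+s}e^{-(n-1)(\rho-M)}\to 0$ as $\rho\to\infty$, we obtain $J_1(x)<\varepsilon/2$ for $\rho$ sufficiently large; therefore $u(x)<\varepsilon$ whenever $\rho(x,o)$ is large enough.

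The only delicate point is the choice of exponent: the argument relies on the hypothesis supplying an arbitrarily large $L^q$ exponent. If one were given only $u\in L^q$ for a single $q\in(p,p+1)$, an additional bootstrap via the HLS inequality would be needed, and this is more subtle in the critical regime because the iteration $1/t_{k+1}=p/t_k-2s/n$ fails to improve the exponent below the critical fixed point $t_\ast=p+1$; the stated form of the hypothesis sidesteps this difficulty.
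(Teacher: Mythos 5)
Your decomposition of $u$ into $J_1$ (contribution from $B_M(o)$) and $J_2$ (the tail), the treatment of $J_1$ via the exponential decay of $G_s$ combined with the monotonicity of the kernel, and the treatment of $J_2$ via H\"older are all structurally identical to the paper's argument. The substantive difference is the H\"older exponent pair on the tail. You pair $u^p\in L^{q/p}$ with $G_s\in L^{(q/p)'}(\hn)$ on the \emph{entire} space, and correctly note that since $G_s(\rho)\sim\rho^{-n+2s}$ near $\rho=0$, global $L^{(q/p)'}$-integrability forces $(q/p)'<n/(n-2s)$, i.e.\ $q>np/(2s)$. The paper instead picks an exponent $a\in(1,q/p)$ close to $1$ (so $a'$ is large) and claims $\|G(\rho(x,\cdot))\|_{L^{a'}(\hn\setminus B_R(o))}\lesssim\int_R^\infty\rho^{-1+s}e^{-(a'(n-1)-(n-1))\rho}\,d\rho$; but the domain $\hn\setminus B_R(o)$ still contains points $y$ arbitrarily close to $x$ when $\rho(x,o)$ is large, so the near-diagonal singularity of $G^{a'}$ is present, and for $a'>n/(n-2s)$ that quantity diverges rather than being small. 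Your choice of exponent is therefore the one that makes the far-field H\"older step airtight, and you also make the cleaner logical split: hold $\|G_s\|_{L^{(q/p)'}(\hn)}$ \emph{fixed and finite} while driving $\|u\|_{L^q(\hn\setminus B_M)}$ to zero, instead of trying to make the $G$-factor itself small. The trade-off is that $q>np/(2s)$ is strictly stronger than $q>p$, so your proof relies on reading the hypothesis as $u\in L^q$ for \emph{all} $q>p$ (or at least for some sufficiently large $q$); as your closing remark correctly observes, the HLS bootstrap stalls at the fixed point $t_*=p+1=2n/(n-2s)$ in the critical case, so a single exponent $q\in(p,p+1)$ would not be enough to salvage this. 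Both arguments implicitly use the same stronger reading, but yours makes the dependence on it explicit.
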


\begin{proof}
Fix $\varepsilon>0$. Choose $R>0$ large and write
\[
u(x)=\int_{B_R(o)}G_s(\rho(x,y))u(y)^p\,dV_y + \int_{\mathbb H^n\setminus B_R(o)}G_s(\rho(x,y))u(y)^p\,dV_y.
\]
For the local part, we have that for any fixed $y\in B_R$, $\rho(x,y)\to\infty$ as $\rho(x,o)\to\infty$. By kernel estimate, $G_s(\rho(x,y))\lesssim \rho^{-1+s}e^{-(n-2s)\rho(x,o)}$. Hence
\[
\int_{B_R}G_s(\rho(x,y))u(y)^p\,dV_y \lesssim \rho^{-1+s}e^{-(n-2s)\rho(x,o)}\int_{B_R}u(y)^p\,dV_y \to 0
\]
uniformly in $x$ as $\rho(x,o)\to\infty$.
 
For the far away part, we choose $a\in(1,\frac{q}{p})$, and let $a'$ denote the conjugate of $a$. By H\"older's inequality,
\[
\left|\int_{\mathbb H^n\setminus B_R(o)}G_s(\rho(x,y))u(y)^p\,dV_y\right| \leq \|G_s(\rho(x,\cdot))\|_{L^{a'}(\mathbb H^n\setminus B_R)}\;\|u^p\|_{L^{a}(\mathbb H^n\setminus B_R)}.
\]
$\|u^p\|_{L^{a}(\mathbb H^n\setminus B_R)}$ is clearly finite since $ap<q$.  
On the other hand, for large $R$, we have 
\[
\int_{ \mathbb H^n\setminus B_R} G_s(\rho(x,y))^{a'}\,dV_y
\lesssim \int_R^\infty \rho^{-1+s}e^{-a'(n-2s)\rho} e^{(n-1)\rho}\,d\rho
= \int_R^\infty \rho^{-1+s}e^{-(a'(n-2s)-(n-1))\rho}\,d\rho
\]
We are able to choose $a$ sufficiently close to $1$, so that $a'(n-2s)-(n-1)>0$. Thus, $\|G_s(\rho(x,\cdot))\|_{L^{a'}(\mathbb H^n\setminus B_R)}$ can be made arbitrarily small for large enough $R$. The proof is now complete.
\end{proof}
From the above estimate at infinity, the moving plane argument in the subcritical case applies here as well, and we conclude that $u$ is radially symmetric about some point in $\hn$.

\section{Several Maximum Principles on \texorpdfstring{$\hn$}{}}\label{sec:MP}
Chen, Li and Li \cite{CLL} established various maximum principles involving the fractional Laplacian on $\rn$, see also their book \cite{CLM_book}. These are essential in their moving plane argument when one deals with the nonlocal operator directly. Here we extend their results to the hyperbolic space $\hn$, which may be of independent interest.

\begin{theorem}[Maximum Principle for Fractional Laplacian on $\hn$]\label{MP}
  Let $\Omega$ be a bounded domain in $\hn$. Assume that $u \in L^\alpha \cap C^{1,1}_{loc}(\Omega)$ and is lower semi-continuous on $\bar{\Omega}$ . If
  \begin{equation*}
    \begin{cases}
      (-\Delta_{\hn})^s u \geq 0, &\text{ in } \Omega,\\
      u \geq 0, &\text{ in } \hn\setminus\Omega,
    \end{cases}
  \end{equation*}
  where $c(x)$ is bounded from below in $\Omega$. Then 
  \begin{equation}\label{mp_u}
    u \geq 0 \text{ in } \Omega.
  \end{equation}
  If $u = 0$ at some point in $\Omega$, then $u(x) = 0$ almost everywhere in $\hn$.
\end{theorem}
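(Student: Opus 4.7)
The plan is to mimic the Euclidean fractional-Laplacian argument of Chen--Li--Li, with the positivity of the kernel $\mathcal K_{n,s}$ (quoted from Banica--Gonz\'alez--S\'aez above) playing the decisive role. The key observation is that $(-\Delta_{\hn})^s$ admits the singular integral representation
$$(-\Delta_{\hn})^s u(x)=c_{n,s}\,\text{P.V.}\int_{\hn}(u(x)-u(y))\mathcal K_{n,s}(d(x,y))\,dV_y,$$
so that probing this identity at an interior minimum of $u$ will yield a non-positive (and typically strictly negative) quantity, contradicting the differential inequality.

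\textbf{Step 1 (existence of an interior minimizer).} Assume for contradiction that $m:=\inf_{\bar\Omega} u<0$. Since $\bar\Omega$ is compact in $\hn$ and $u$ is lower semi-continuous on $\bar\Omega$, the infimum is attained at some $x_0\in\bar\Omega$. Because $\Omega$ is open, $\partial\Omega\subset\hn\setminus\Omega$, and the hypothesis $u\geq 0$ on $\hn\setminus\Omega$ rules out $x_0\in\partial\Omega$; hence $x_0$ lies in the interior, where $u$ is $C^{1,1}_{\mathrm{loc}}$.

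\textbf{Step 2 (pointwise contradiction).} The $C^{1,1}_{\mathrm{loc}}$ regularity near $x_0$ together with $u\in L^\alpha(\hn)$ justifies evaluating $(-\Delta_{\hn})^s u(x_0)$ classically via the singular integral. Indeed, on a small geodesic ball the integrand is $O(d(x_0,y)^2)\cdot d(x_0,y)^{-n-2s}$ by the kernel asymptotics $\mathcal K_{n,s}(\rho)\sim\rho^{-n-2s}$ as $\rho\to 0$, so the near-diagonal part is absolutely integrable; the far part is controlled by the exponential decay $\mathcal K_{n,s}(\rho)\sim\rho^{-1-s}e^{-(n-1)\rho}$ weighted against $u\in L^\alpha$ and the hyperbolic volume factor $\sinh^{n-1}\rho$. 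Since $u(x_0)\leq u(y)$ for every $y\in\hn$ with strict inequality on the positive-measure set $\hn\setminus\Omega$ (where $u\geq 0>u(x_0)$), and since $\mathcal K_{n,s}>0$, we obtain $(-\Delta_{\hn})^s u(x_0)<0$, contradicting the assumption. This proves \eqref{mp_u}.

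\textbf{Step 3 (strong maximum principle).} Suppose now $u(x_1)=0$ at some $x_1\in\Omega$. The same pointwise representation at $x_1$ yields
$$0\leq(-\Delta_{\hn})^s u(x_1)=-c_{n,s}\int_{\hn}u(y)\mathcal K_{n,s}(d(x_1,y))\,dV_y\leq 0,$$
since we already have $u\geq 0$ everywhere. Hence the integral vanishes; combined with the strict positivity of $\mathcal K_{n,s}(\rho)$ for $\rho>0$, this forces $u=0$ almost everywhere on $\hn$.

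The principal technical point is the rigorous justification of the classical (non-distributional) evaluation of $(-\Delta_{\hn})^s u$ at the interior minimum $x_0$, which requires combining the regularity hypothesis with the two-sided kernel asymptotics and the hyperbolic volume growth; otherwise the argument is entirely parallel to the Euclidean case, the positivity of $\mathcal K_{n,s}$ being the crucial input that makes the sign-chasing work.
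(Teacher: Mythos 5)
Your proposal is correct and follows essentially the same argument as the paper: assume a negative infimum, locate it at an interior point via lower semicontinuity and the exterior sign condition, and evaluate the singular integral at that point, using the positivity of $\mathcal K_{n,s}$ and the fact that $u(x_0)-u(\xi)\le 0$ everywhere (strictly on $\hn\setminus\Omega$) to produce a strictly negative value of $(-\Delta_{\hn})^s u(x_0)$; the strong maximum principle is then obtained by the same sign-chasing at a zero of $u$. The only difference is that you spell out the integrability justification (near-diagonal control via $C^{1,1}$ and kernel asymptotics, far-field control via $u\in L^\alpha$ and exponential decay of $\mathcal K_{n,s}$), which the paper takes as given.
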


\begin{proof}
  If \eqref{mp_u} does not hold, then the lower semi-continuity of $u$ indicates that there exists a $x^0 \in \bar{\Omega}$ such that  $u(x^0) = \min\limits_\Omega u < 0$.  And one can further deduce that $x^0$ is in the interior of $\Omega$.
  Then 
  \begin{align*}
    (-\Delta_{\hn})^s u(x^0) &= C_{n,s} \, \text{P.V.} \int_{\mathbb{H}^n} (u(x^0) - u(\xi)) \, \mathcal{K}_{n,s}(d(x,\xi)) \, d\xi\\
    &\leq C_{n,s} \int_{\mathbb{H}^n\setminus\Omega} (u(x^0) - u(\xi)) \, \mathcal{K}_{n,s}(d(x,\xi)) \, d\xi\\
    &\leq C_{n,s} \int_{\mathbb{H}^n\setminus\Omega} -u(\xi) \, \mathcal{K}_{n,s}(d(x,\xi)) \, d\xi < 0.
  \end{align*}
If at some point $x^0 \in \Omega, u(x^0) = 0$, then 
\begin{align*}
  (-\Delta_{\hn})^s u(x^0) &= C_{n,s} \, \text{P.V.} \int_{\mathbb{H}^n} - u(\xi) \, \mathcal{K}_{n,s}(d(x,\xi)) \, d\xi \geq 0.
\end{align*}
Since $u\geq 0$ in both $\Omega$ and $\hn\setminus\Omega$, we have $u(\xi) = 0$ almost everywhere in $\hn$.
\end{proof}

Then we introduce a maximum principle for anti-symmetric functions.
\begin{theorem}
\label{MP_anti}
  Let $\Omega$ be a bounded domain in $\hn$. Assume that $u \in L^\alpha \cap C^{1,1}_{loc}(\Omega)$ and is lower semi-continuous on $\bar{\Omega}$ . If
  \begin{equation*}
    \begin{cases}
      (-\Delta_{\hn})^s u + c(x)u \geq 0, &\text{ in } \Omega,\\
      u \geq 0, &\text{ in } \Sigma,\\
      u(x) = -u(x^\lambda), &\text{ in } \Sigma,
    \end{cases}
  \end{equation*}
  where $c(x)$ is bounded from below in $\Omega$. Then 
  \begin{equation}\label{mp_anti_u}
    u \geq 0 \text{ in } \Omega.
  \end{equation}
  If $u = 0$ at some point in $\Omega$, then $u(x) = 0$ almost everywhere in $\hn$.
\end{theorem}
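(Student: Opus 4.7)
The plan is to mirror the contradiction argument used for Theorem \ref{MP}, exploiting the cancellations produced by the antisymmetry $u(\xi) = -u(\xi^\lambda)$. First, observe that this antisymmetry forces $u \equiv 0$ on the fixed hypersurface $U_\lambda$ (since $\xi = \xi^\lambda$ there). Suppose toward a contradiction that $u(x^0) = \min_{\bar\Omega} u < 0$; by lower semicontinuity together with $u \geq 0$ outside $\Omega$, the minimum is attained at an interior point $x^0 \in \Omega$, and because $u \equiv 0$ on $U_\lambda$ we must have $d(x^0,U_\lambda) > 0$.

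Next, I would evaluate $(-\Deltah)^s u(x^0)$ via the singular-integral representation. Split the domain as $\hn = \Sigma \cup I_\lambda(\Sigma)$, change variables $\xi \mapsto \xi^\lambda$ on the second piece using that $I_\lambda$ is an isometry, and substitute $u(\xi^\lambda) = -u(\xi)$. After algebraically regrouping the two integrands one arrives at the key identity
\begin{equation*}
(-\Deltah)^s u(x^0) = c_{n,s}\,\mathrm{P.V.}\!\int_{\Sigma}\!\bigl(u(x^0)-u(\xi)\bigr)\bigl[\mathcal{K}_{n,s}(d(x^0,\xi))-\mathcal{K}_{n,s}(d(x^0,\xi^\lambda))\bigr]\,dV_\xi + 2c_{n,s}\,u(x^0)\!\int_{\Sigma}\!\mathcal{K}_{n,s}(d(x^0,\xi^\lambda))\,dV_\xi.
\end{equation*}
For $x^0,\xi$ on the same side of $U_\lambda$, the standard triangle inequality against the reflection gives $d(x^0,\xi)\leq d(x^0,\xi^\lambda)$, so the strict monotonicity of $\mathcal{K}_{n,s}$, parallel to that of $G_s$ just established, makes the bracketed factor nonnegative. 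Since $x^0$ is the global minimum of $u$ on $\Sigma$, we also have $u(x^0)-u(\xi) \leq 0$. The first integral is therefore $\leq 0$ and the second term is strictly negative, so $(-\Deltah)^s u(x^0) < 0$; combined with the hypothesis $(-\Deltah)^s u(x^0) \geq -c(x^0)u(x^0)$ and the lower bound on $c$, this contradicts the sign of $u(x^0)$.

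For the strong statement, if $u(x^0)=0$ at some interior $x^0$, the second term of the identity vanishes, and the first integrand is a nonpositive-times-nonnegative product that must therefore vanish $dV$-a.e. Strict monotonicity of $\mathcal{K}_{n,s}$ then forces $u \equiv 0$ on $\Sigma$, and antisymmetry propagates the vanishing to all of $\hn$. The main technical obstacle I anticipate is the bookkeeping near the singularity $\xi = x^0$: the kernel blows up like $d^{-n-2s}$, but the $C^{1,1}_{\mathrm{loc}}$ regularity together with the vanishing of $\nabla u(x^0)$ at the interior minimum yields $u(x^0)-u(\xi) = O(d^2)$, so the product is $O(d^{2-n-2s})$ and absolutely integrable against the hyperbolic volume form for $s<1$. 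A secondary check is strict monotonicity of $\mathcal{K}_{n,s}$, which the paper only proved explicitly for $G_s$; the same Bessel-function-based argument transfers with only cosmetic changes to the Bessel order.
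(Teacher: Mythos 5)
Your proposal is essentially correct and matches the approach the paper takes for the adjacent results (the paper states Theorem \ref{MP_anti} without proof, but its proof of the narrow region principle, Theorem \ref{narrow_region}, uses exactly the decomposition you describe). Your regrouping of the singular integral into a ``difference kernel'' piece plus $2u(x^0)\int_\Sigma\mathcal{K}_{n,s}(d(x^0,\xi^\lambda))\,dV_\xi$ is algebraically equivalent to the paper's form $\int_\Sigma(u(x^0)-u(\xi))\mathcal{K}(d(x^0,\xi))\,dV_\xi + \int_\Sigma(u(x^0)+u(\xi))\mathcal{K}(d(x^0,\xi^\lambda))\,dV_\xi$, and the key ingredients --- the isometry $I_\lambda$, the antisymmetry $u(\xi^\lambda)=-u(\xi)$, the triangle inequality $d(x^0,\xi)\leq d(x^0,\xi^\lambda)$ for $x^0,\xi$ on the same side of $U_\lambda$, and the monotonicity of $\mathcal{K}_{n,s}$ --- are the same. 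Your remarks on the integrability of the singularity via $C^{1,1}_{\mathrm{loc}}$ regularity and on transferring the monotonicity argument from $G_s$ to $\mathcal{K}_{n,s}$ are both appropriate.

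One caveat worth stating explicitly: your closing contradiction (``$(-\Delta_{\hn})^s u(x^0)<0$ combined with the hypothesis and the lower bound on $c$ contradicts the sign of $u(x^0)$'') actually requires $c(x^0)\geq 0$, not merely that $c$ is bounded below. From your two-term identity one gets $(-\Delta_{\hn})^s u(x^0)\leq 2c_{n,s}u(x^0)\int_\Sigma\mathcal{K}_{n,s}(d(x^0,\xi^\lambda))\,dV_\xi<0$, and the hypothesis yields $(-\Delta_{\hn})^s u(x^0)\geq -c(x^0)u(x^0)$; if $c(x^0)<0$ then $-c(x^0)u(x^0)<0$ too and no contradiction follows without a quantitative comparison between $|c(x^0)|$ and the kernel integral --- which is precisely what the narrow region hypothesis of Theorem \ref{narrow_region} supplies. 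This is a defect of the theorem statement itself (the paper copies the phrase ``bounded from below'' from the narrow-region setting), and the standard Chen--Li--Li formulation of the unconstrained anti-symmetric maximum principle indeed assumes $c\geq 0$. Under that corrected hypothesis your argument, including the rigidity step for $u(x^0)=0$, is sound.
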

The next result is a maximum principle in narrow regions.
\begin{theorem}
\label{narrow_region}
  Let $U_\lambda,\, \Sigma_\lambda, x^\lambda$ be defined as in Section \ref{sec:prelim}. Assume that $u \in L^\alpha \cap C^{1,1}_{loc}(\Omega)$ and is lower semi-continuous on $\bar{\Omega}$ . If
  Let $\Omega$ be an bounded narrow region in $\Sigma_\lambda$, which is contained in $\{x\mid \lambda-l<x_1<\lambda\}$ with small $l$. Assume that $u \in L^\alpha \cap C^{1,1}_{loc}(\Omega)$ and is lower semi-continuous on $\bar{\Omega}$, which satisfies
  \begin{equation*}
    \begin{cases}
      (-\Delta_{\hn})^s u + c(x)u \geq 0, &\text{ in } \Omega,\\
      u \geq 0, &\text{ in } \Sigma_\lambda\setminus\Omega,\\
      u(\tilde{x}) = -u(x^\lambda), &\text{ in } \Sigma_\lambda.
    \end{cases}
  \end{equation*}
  If $c(x)$ is bounded from below in $\Omega$, then 
  \begin{equation}\label{narrow_u}
    u \geq 0 \text{ in } \Omega.
  \end{equation}
  Furthermore, if $u = 0$ at some point in $\Omega$, then $u(x) = 0$ almost everywhere in $\hn$.
\end{theorem}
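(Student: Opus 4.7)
The plan is to argue by contradiction, mimicking the template of Theorem \ref{MP_anti} but exploiting the narrowness of $\Omega$ to absorb the possibly sign-changing coefficient $c$. Assuming \eqref{narrow_u} fails, the lower semi-continuity of $u$ together with the hypothesis $u \geq 0$ on $\Sigma_\lambda \setminus \Omega$ forces $u$ to attain a negative minimum (over $\Sigma_\lambda$) at some point $x^0$ lying in the interior of $\Omega$, where the pointwise singular integral representation of $(-\Delta_{\hn})^s u(x^0)$ is legitimate thanks to the $C^{1,1}_{\mathrm{loc}}$ hypothesis.

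The central manipulation is to pair each $\eta \in \Sigma_\lambda$ with its reflection $\eta^\lambda \in \Sigma_\lambda^c$. Splitting the principal value integral across $U_\lambda$, performing the isometric change of variable $\xi = \eta^\lambda$ on the piece over $\Sigma_\lambda^c$, and invoking the antisymmetry $u(\eta^\lambda) = -u(\eta)$, one rewrites
\[
(-\Delta_{\hn})^s u(x^0) = c_{n,s} \, \mathrm{P.V.} \int_{\Sigma_\lambda} \bigl\{ 2u(x^0)\, \mathcal{K}_{n,s}(d(x^0,\eta^\lambda)) - (u(\eta) - u(x^0))\,[\mathcal{K}_{n,s}(d(x^0,\eta)) - \mathcal{K}_{n,s}(d(x^0,\eta^\lambda))] \bigr\}\, d\eta.
\]
Because the foliation leaves $U_s$ are totally geodesic and $I_\lambda$ is an isometry fixing $U_\lambda$ pointwise, a triangle-inequality argument delivers the strict geometric inequality $d(x^0,\eta) < d(x^0,\eta^\lambda)$ whenever $x^0,\eta \in \Sigma_\lambda$. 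Combined with the strict monotonicity of $\mathcal{K}_{n,s}(\rho)$ in $\rho$ (which can be established for $\mathcal{K}_{n,s}$ via the same recursive identity used in the Proposition proved for the Green's function $G_{n,s}$), this makes the second summand in braces nonnegative; discarding it and undoing the isometry yields the clean bound
\[
(-\Delta_{\hn})^s u(x^0) \leq 2 c_{n,s}\, u(x^0) \int_{\Sigma_\lambda^c} \mathcal{K}_{n,s}(d(x^0,\xi))\, d\xi.
\]

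Inserting this into $(-\Delta_{\hn})^s u(x^0) + c(x^0) u(x^0) \geq 0$ and dividing by $u(x^0) < 0$ (which flips the inequality) produces
\[
2 c_{n,s} \int_{\Sigma_\lambda^c} \mathcal{K}_{n,s}(d(x^0,\xi))\, d\xi \leq -c(x^0) \leq \sup_{\Omega}(-c),
\]
which is finite by the lower bound hypothesis on $c$. The contradiction then follows from showing that the integral on the left blows up as $l \to 0$; this is the main technical obstacle of the argument. Since $x^0$ sits within hyperbolic distance $l$ of $U_\lambda$, the reflected half-space $\Sigma_\lambda^c$ contains points arbitrarily close to $x^0$, and using the near-diagonal asymptotic $\mathcal{K}_{n,s}(\rho) \sim \rho^{-n-2s}$ from the BGS estimate together with the local Euclidean comparison of the volume element $\sinh^{n-1}\rho\, d\rho\, d\omega \sim \rho^{n-1}\, d\rho\, d\omega$ on a geodesic ball of fixed size centered at $x^0$, one derives a lower bound of order $l^{-2s}$ for this integral, producing the desired contradiction once $l$ is chosen sufficiently small.

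For the strong form, the weak version just established provides $u \geq 0$ throughout $\Sigma_\lambda$. If $u(x^0) = 0$ at some interior $x^0 \in \Omega$, then the same decomposition specializes (using $u(x^0) = 0$) to
\[
(-\Delta_{\hn})^s u(x^0) = -c_{n,s} \int_{\Sigma_\lambda} u(\eta)\, [\mathcal{K}_{n,s}(d(x^0,\eta)) - \mathcal{K}_{n,s}(d(x^0,\eta^\lambda))]\, d\eta \leq 0,
\]
while the differential inequality applied at a zero of $u$ gives $(-\Delta_{\hn})^s u(x^0) \geq 0$. Hence the integrand vanishes identically; strict monotonicity of $\mathcal{K}_{n,s}$ then forces $u(\eta) = 0$ a.e.\ on $\Sigma_\lambda$, and antisymmetry propagates this conclusion to all of $\hn$.
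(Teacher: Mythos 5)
Your proof is correct and follows the paper's argument: same reflection decomposition across $U_\lambda$, same pointwise upper bound $(-\Delta_{\hn})^s u(x^0) \leq 2 c_{n,s}\, u(x^0) \int_{\Sigma_\lambda^c} \mathcal{K}_{n,s}(d(x^0,\xi))\, dV_\xi$ at the interior negative minimum, and same blow-up mechanism for the reflected-kernel integral as $l\to 0$ (you quantify it as $\sim l^{-2s}$, a bit more precisely than the paper, and you also supply the strong-form argument that the paper leaves implicit). One small slip: the second summand in your braces, $-(u(\eta)-u(x^0))\,\bigl[\mathcal{K}_{n,s}(d(x^0,\eta))-\mathcal{K}_{n,s}(d(x^0,\eta^\lambda))\bigr]$, is \emph{nonpositive}, not nonnegative --- dropping a nonpositive term is what gives the claimed upper bound, and your stated bound and its subsequent use are correct, so this is purely a description error.
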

\begin{proof}
  If \eqref{narrow_u} does not hold, then the lower semi-continuity of $u$ indicates that there exists a $x^0 \in \bar{\Omega}$ such that  $u(x^0) = \min\limits_\Omega u < 0$.  And one can further deduce that $x^0$ is in the interior of $\Omega$.
  Then 
  \begin{align*}
    &(-\Delta_{\hn})^s u(x^0) = C_{n,s} \, \text{P.V.} \int_{\mathbb{H}^n} (u(x^0) - u(\xi)) \, \mathcal{K}_{n,s}(d(x,\xi)) \, d\xi\\
    &= C_{n,s} \text{P.V.}\left\{\int_{\Sigma_\lambda} (u(x^0) - u(\xi)) \, \mathcal{K}_{n,s}(d(x,\xi)) \, d\xi +  \int_{\Sigma^c_\lambda} (u(x^0) - u(\xi)) \, \mathcal{K}_{n,s}(d(x,\xi)) \, d\xi\right\}\\
    &= C_{n,s} \text{P.V.}\left\{\int_{\Sigma_\lambda} (u(x^0) - u(\xi)) \, \mathcal{K}_{n,s}(d(x,\xi)) \, d\xi +  \int_{\Sigma_\lambda} (u(x^0) + u(\xi)) \, \mathcal{K}_{n,s}(d(x,\xi^\lambda)) \, d\xi\right\}\\
    &\leq C_{n,s} \text{P.V.}\left\{\int_{\Sigma_\lambda} (u(x^0) - u(\xi)) \, \mathcal{K}_{n,s}(d(x,\xi^\lambda)) \, d\xi +  \int_{\Sigma_\lambda} (u(x^0) + u(\xi)) \, \mathcal{K}_{n,s}(d(x,\xi^\lambda)) \, d\xi\right\}\\
    &= 2C_{n,s} u(x^0) \int_{\Sigma_\lambda} \mathcal{K}_{n,s}(d(x,\xi^\lambda)) \, d\xi.
  \end{align*}
  To estimate the integral above, given that $x^0$ lies in a narrow region close to the hyperplane $\{y\mid y_1=\lambda\}$. Consider the extreme case that $x^0$ lies on the hyperplane $\{y\mid y_1=\lambda\}$, then 
  \begin{align*}
    \int_{\Sigma_\lambda} \mathcal{K}_{n,s}(d(x,\xi^\lambda)) \, d\xi &\sim \int_{\Sigma_\lambda}\rho^{-n-2s} \, d\xi=\infty,
  \end{align*}
  where $\rho=\disth(\xi,x^0)$. Then by integrating on a domain that is suﬃciently close to the hyperplane $P$, we can obtain a lower bound of the integral. Since $c(x)$ is bounded from below, we can choose $l$ small enough such that $(-\Delta_{\hn})^s u(x^0) + c(x^0)u(x^0) < 0$, which contradicts the assumption.
\end{proof}

\begin{theorem}\label{decay_infinity}
  Let $U_\lambda,\, \Sigma_\lambda, x^\lambda$ be defined as in Section \ref{sec:prelim}, and let $\Omega$ be an unbounded domain in $\Sigma_\lambda$. Assume that $u \in L^\alpha \cap C^{1,1}_{loc}(\Omega)$ and is lower semi-continuous on $\bar{\Omega}$ . If
  \begin{equation*}
    \begin{cases}
      (-\Delta_{\hn})^s u + c(x)u \geq 0, &\text{ in } \Omega,\\
      u \geq 0, &\text{ in } \Sigma_\lambda\setminus\Omega,\\
      u(\tilde{x}) = -u(x), &\text{ in } \Sigma_\lambda,
    \end{cases}
  \end{equation*}
  where $c(x)$ satisfies
  \begin{equation}\label{c_cond}
    \liminf\limits_{\rho(x,0)\to\infty}(d(x,0))^{(n+2s)}e^{c(n-1)d(x,0)}c(x)\geq 0,
  \end{equation}
  for some $c=c(n)>0$.
  Then there exists a constant $R_0>0$ depending on $c(x)$, such that if $u$ attains its negative minimum at $x^0\in\Omega$, then 
  \begin{equation*}
    \disth(x^0,0)\leq R_0.
  \end{equation*} 
\end{theorem}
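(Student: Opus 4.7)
The plan is to argue by contradiction in the spirit of the decay-at-infinity estimate of Chen--Li--Li \cite{CLL}, but with the hyperbolic singular kernel $\mathcal K_{n,s}$ replacing $|x-y|^{-n-2s}$. Suppose that $u$ attains its negative minimum at $x^0 \in \Omega$ and that $\disth(x^0,0)$ is arbitrarily large. Using the pointwise representation of $(-\Delta_{\hn})^s u(x^0)$ together with the antisymmetry of $u$ across $U_\lambda$ and the monotonicity of the kernel $\mathcal K_{n,s}$ (which follows from the same argument as the monotonicity of $G_s$ established in Section \ref{sec:frac_lap}), the first step is to produce a one-sided pointwise bound of the form
\begin{equation*}
(-\Delta_{\hn})^s u(x^0) \;\leq\; 2\,C_{n,s}\, u(x^0) \int_{\Sigma_\lambda^c} \mathcal K_{n,s}\bigl(\disth(x^0,\eta)\bigr)\, dV_\eta.
\end{equation*}
This is exactly the computation performed in the proof of Theorem \ref{narrow_region}: split the principal-value integral into $\Sigma_\lambda$ and $\Sigma_\lambda^c$, change variables $\eta = \xi^\lambda$ in the latter piece, invoke $u(\xi^\lambda)=-u(\xi)$, and then exploit $\disth(x^0,\xi) \leq \disth(x^0,\xi^\lambda)$ together with the monotonicity of $\mathcal K_{n,s}$ on each term containing $u(x^0)-u(\xi) \leq 0$.

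Next I would combine this bound with the hypothesis $(-\Delta_{\hn})^s u + c(x) u \geq 0$ evaluated at $x^0$. Since $u(x^0)<0$, dividing by $u(x^0)$ flips the inequality and yields
\begin{equation*}
c(x^0) \;\leq\; -\,2\,C_{n,s}\int_{\Sigma_\lambda^c} \mathcal K_{n,s}\bigl(\disth(x^0,\eta)\bigr)\, dV_\eta.
\end{equation*}
The task then reduces to producing a quantitative lower bound for the right-hand side in terms of $\disth(x^0,0)$. To this end, I would fix a point $p_0 \in \Sigma_\lambda^c$ and a unit geodesic ball $B_1(p_0) \subset \Sigma_\lambda^c$ (for any finite $\lambda$ such a ball exists). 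For $\eta \in B_1(p_0)$, the triangle inequality gives $\disth(x^0,\eta) \leq \disth(x^0,0)+\disth(0,p_0)+1$, and the asymptotic $\mathcal K_{n,s}(\rho)\sim \rho^{-1-s}e^{-(n-1)\rho}$ as $\rho\to\infty$ (recalled in Section \ref{sec:frac_lap}) combined with the monotonicity of $\mathcal K_{n,s}$ produces
\begin{equation*}
\int_{\Sigma_\lambda^c} \mathcal K_{n,s}\bigl(\disth(x^0,\eta)\bigr)\, dV_\eta \;\geq\; c_0\,\disth(x^0,0)^{-1-s}\,e^{-(n-1)\disth(x^0,0)}
\end{equation*}
for some $c_0>0$, once $\disth(x^0,0)$ is sufficiently large.

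Inserting this estimate into the bound on $c(x^0)$ and multiplying by $\disth(x^0,0)^{\,n+2s}e^{(n-1)\disth(x^0,0)}$ (i.e.\ taking $c=1$ in the statement) gives
\begin{equation*}
\disth(x^0,0)^{\,n+2s}\,e^{(n-1)\disth(x^0,0)}\,c(x^0) \;\leq\; -\,C'\,\disth(x^0,0)^{\,n+s-1},
\end{equation*}
whose right-hand side tends to $-\infty$ as $\disth(x^0,0)\to\infty$. This directly contradicts the hypothesis \eqref{c_cond}, and therefore there must exist some $R_0>0$ such that $\disth(x^0,0)\leq R_0$ whenever $u$ attains a negative minimum at $x^0$.

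The main technical obstacle I anticipate is the explicit lower bound for $\int_{\Sigma_\lambda^c} \mathcal K_{n,s}$, since in hyperbolic geometry both the volume element and the kernel carry exponential factors that must be reconciled. Unlike the Euclidean case, where the polynomial kernel admits a trivial scaling estimate, here I have to pick a fixed test region in $\Sigma_\lambda^c$ and push the triangle inequality together with the sharp asymptotics of $\mathcal K_{n,s}$ at infinity; the exponent $(n-1)$ in the decay of $\mathcal K_{n,s}$ is precisely what forces the choice $c=1$ in the assumption \eqref{c_cond}, and any larger $c=c(n)\geq 1$ would also work. A secondary technical point is verifying that the monotonicity proof for $G_s$ carries over verbatim to $\mathcal K_{n,s}$, which it does since both are built from the same differential operator acting on a Bessel-type profile.
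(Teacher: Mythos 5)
Your proposal is correct and follows the same overall strategy as the paper: split the principal-value integral by antisymmetry, use the monotonicity of $\mathcal K_{n,s}$ to collapse the two halves into a single term proportional to $u(x^0)$, combine with $(-\Delta_{\hn})^s u + c(x)u \ge 0$ at $x^0$, and then contradict the asymptotic hypothesis on $c(x)$ via the far-field decay $\mathcal K_{n,s}(\rho)\sim\rho^{-1-s}e^{-(n-1)\rho}$. You also correctly note that monotonicity of $\mathcal K_{n,s}$ follows by the same telescoping argument used for $G_s$ in Section~\ref{sec:frac_lap}, since the kernel in dimension $n+2$ is again positive.

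Where you genuinely diverge from the paper is the choice of test region for the lower bound on $\int_{\Sigma_\lambda^c}\mathcal K_{n,s}(\disth(x^0,\cdot))\,dV$. The paper integrates over a hyperbolic ball $B_{\rho(x^0,0)}(x^1)$ whose radius grows with $\rho(x^0,0)$, bounds the kernel below by its value at the maximal distance $c_1\disth(x^0,0)$, and then multiplies by the exponentially growing volume $\operatorname{vol}(B_{\rho(x^0,0)})\sim e^{(n-1)\rho(x^0,0)}$; this introduces a geometry-dependent constant $c_1>1$ whose value is left imprecise, and in fact the paper's final display conflates the kernel's far-field polynomial rate $\rho^{-1-s}$ with its near-field rate $\rho^{-(n+2s)}$. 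Your fixed unit ball $B_1(p_0)\subset\Sigma_\lambda^c$ avoids this entirely: the triangle inequality gives a clean upper bound $\disth(x^0,\eta)\le \disth(x^0,0)+C$, the kernel monotonicity then gives the uniform lower bound $c_0\,\disth(x^0,0)^{-1-s}e^{-(n-1)\disth(x^0,0)}$, and the resulting quantity $d^{n+2s}e^{(n-1)d}c(x^0)\le -C'\,d^{\,n+s-1}\to-\infty$ contradicts~\eqref{c_cond} with the explicit constant $c=1$. This is a cleaner derivation, it makes the admissible constant $c(n)$ explicit, and it avoids the inconsistency in the paper's final estimate. One small point you might add for completeness: since $x^0$ is the minimizer, one needs $u(x^0)\le u(\xi)$ for \emph{all} $\xi\in\Sigma_\lambda$ (not just $\xi\in\Omega$) to justify dropping the first integrand; this holds because $u\ge 0>u(x^0)$ on $\Sigma_\lambda\setminus\Omega$, which is worth stating.
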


\begin{proof}
  By definition, we have
  \begin{align*}
    &(-\Delta_{\hn})^s u(x^0) = C_{n,s} \, \text{P.V.} \int_{\mathbb{H}^n} (u(x^0) - u(\xi)) \, \mathcal{K}_{n,s}(d(x,\xi)) \, d\xi\\
    &= C_{n,s} \text{P.V.}\left\{\int_{\Sigma_\lambda} (u(x^0) - u(\xi)) \, \mathcal{K}_{n,s}(d(x,\xi)) \, d\xi +  \int_{\Sigma^c_\lambda} (u(x^0) - u(\xi)) \, \mathcal{K}_{n,s}(d(x,\xi)) \, d\xi\right\}\\
    &= C_{n,s} \text{P.V.}\left\{\int_{\Sigma_\lambda} (u(x^0) - u(\xi)) \, \mathcal{K}_{n,s}(d(x,\xi)) \, d\xi +  \int_{\Sigma_\lambda} (u(x^0) + u(\xi)) \, \mathcal{K}_{n,s}(d(x,\xi^\lambda)) \, d\xi\right\}\\
    &\leq C_{n,s} \text{P.V.}\left\{\int_{\Sigma_\lambda} (u(x^0) - u(\xi)) \, \mathcal{K}_{n,s}(d(x,\xi^\lambda)) \, d\xi +  \int_{\Sigma_\lambda} (u(x^0) + u(\xi)) \, \mathcal{K}_{n,s}(d(x,\xi^\lambda)) \, d\xi\right\}\\
    &= 2C_{n,s} u(x^0) \int_{\Sigma_\lambda} \mathcal{K}_{n,s}(d(x,\xi^\lambda)) \, d\xi.
  \end{align*}
  For each fixed $\lambda$, when $\rho(x^0,0) \geq \lambda$, we have $B_{\rho(x^0,0)} (x^1)\subset \Sigma_\lambda^c$, with $x^1 = (3\rho(x^0,0)+x_1^0, (x^0)^\prime)$. Then 
  \begin{align*}
    \int_{\Sigma_\lambda} \mathcal{K}_{n,s}(d(x,\xi^\lambda)) \, d\xi &\geq \int_{B_{\rho(x^0,0)} (x^1)} \mathcal{K}_{n,s}(d(x,\xi)) \, d\xi \\
    &\geq \int_{B_{\rho(x^0,0)} (x^1)} \mathcal{K}_{n,s}(c_1\cdot d(x^0,0)) \, d\xi\\
    &\gtrsim \mathcal{K}_{n,s}(c\cdot d(x^0,0))\textit{vol}(B_{\rho(x^0,0)}),\\
  \end{align*}
  where $c_1>0$ is a finite constant determined by the hyperbolic geometry. 
  Then we see that 
  \begin{align*}
    0\leq (-\Delta_{\hn})^s u(x^0) + c(x^0)u(x^0) &\lesssim  u(x^0) \left(\mathcal{K}_{n,s}(c\cdot d(x^0,0)) + c(x^0)\right).
  \end{align*}
  Letting $d(x^0,0) \to \infty$ and by the asymptotic behavior of the kernel $\mathcal{K}_{n,s}$, we have
  \begin{align*}
    \mathcal{K}_{n,s}(d(x,0)) \to c_2(d(x,0))^{-(n+2s)}e^{-c_1(n-1)d(x,0)}.
  \end{align*}
  which is a contradiction with the assumption \eqref{c_cond} on $c(x)$ since $u(x^0)<0$.
\end{proof}

\section*{Acknowlegment} The author would like to thank Professor Bruno, T and Professor Papageorgiou, E for their valuable comments and for informing some crucial recent results in this topic.


\begin{thebibliography}{99}

    \bibitem{AS64} Abramowitz, M., Stegun, I. A. \textit{Handbook of Mathematical Functions with Formulas, Graphs, and Mathematical Tables}, National Bureau of Standards Applied Mathematics Series 55, US Government Printing Office, Washington, DC, (1964).

    \bibitem{Alexandrov} Alexandrov, A. D. \textit{A characteristic property of spheres}, Ann. Mat. Pura Appl. (4), 58:303-315, (1962).

    \bibitem{ADG1} Almeida, L., Damascelli, L.; Ge, Y. \textit{A few symmetry results for nonlinear elliptic PDE on noncompact manifolds}, Ann. Inst. H. Poincar\'e Anal. Non Lin\'eaire, 19(3) (2002) 313-342.

    \bibitem{ADG2} Almeida, L., Ge, Y. \textit{Symmetry results for positive solutions of some elliptic equations on manifolds}, Ann. Glob. Anal. Geom. 18 (2) (2000) 153-170.

    \bibitem{Anker-Ji} J.-P. Anker and L. Ji, Heat kernel and Green function estimates on noncompact symmetric spaces, Geom. Funct. Anal. {\bf 9} (1999), no.~6, 1035--1091;

    %\bibitem{Aubin} Aubin, T. \textit{\'Equations diff\'erentielles non lin\'eaires et probl\'eme de Yamabe concernant la courbure scalaire}, J. Math. Pures Appl. (9) 55 (3) (1976) 269-296.

    \bibitem{Banica07} Banica, V. \textit{The nonlinear Schrödinger equation on hyperbolic space}, Comm. Partial Differential Equations. 32, 1643-1677 (2007).

    \bibitem{BGS15} Banica, V., del Mar González, M., Sáez, M. \textit{Some constructions for the fractional Laplacian on noncompact manifolds}, Rev. Mat. Iberoamericana. 31, 681-712 (2015).

    \bibitem{Beckner} Beckner, W. \textit{On Lie groups and hyperbolic symmetry -- from Kunze-Stein phenomena to Riesz potentials}, Nonlinear Analysis. 126 (2015), 394-414.

   % \bibitem{BerchioGazzolaWeth1} Berchio, V., Gazzola, F. and Tobias Weth, \textit{Radial symmetry of positive solutions to nonlinear polyharmonic {D}irichlet problems}, J. Reine Angew. Math., 620:165--183, (2008).

   % \bibitem{BirindelliMazzeo} Birindelli, I., Mazzeo, R. \textit{Symmetry for solutions of two-phase semilinear elliptic equations on hyperbolic space}, (English summary) Indiana Univ. Math. J. 58 (2009), no.5, 2347-2368.

   \bibitem{BCPS} Br\"andle, C., E. Colorado, A. de Pablo, U. Sánchez. \textit{A Concave—Convex Elliptic Problem Involving the Fractional Laplacian}, Proc. Roy. Soc. Edinburgh Sect. A 143 no. 1 (2013): 39-71. 
   
   \bibitem{BP25} Bruno, T., Papageorgiou, E. \textit{Blow-up exponents and a semilinear elliptic equation for the fractional Laplacian on hyperbolic spaces}, http://arxiv.org/abs/2509.12349, (2025)

   % \bibitem{Bra} Branson, T.P. \textit{Sharp Inequalities, the Functional Determinant, and the Complementary Series}, Trans of the AMS, 347, no. 10 (1995): 3671-3742.

   %\bibitem{BN} Br\'ezis, H., Nirenberg, L. \textit{Positive solutions of nonlinear elliptic equations involving critical sobolev exponents}, Commun. Pure Appl. Anal., 36 (1983) 437-477.

    \bibitem{CGS} Caffarelli, L., Gidas, B., Spruck, J. \textit{Asymptotic symmetry and local behavior of semilinear elliptic equations with critical Sobolev growth}, Comm. Pure Appl. Math., 42(3) (1989) 271-297.

    \bibitem{CS07} Caffarelli, L., Silvestre, L. \textit{An Extension Problem Related to the Fractional Laplacian}, Communications in Partial Differential Equations. 32, 1245-1260 (2007).

    %\bibitem{Chang} Chang, S.-Y.A. \textit{On a fourth-order partial differential equation in conformal geometry}, in: M. Christ, et al. (Eds.), Harmonic Analysis and Partial Differential Equations. Essays in honor of Alberto P. Calderón's 75th birthday, Proceedings of a Conference, University of Chicago, February 1996, in: Chicago Lectures in Mathematics, The University of Chicago Press, 1999, pp. 127-150.

    \bibitem{CG11}  Chang, S.-Y.A., González, M. \textit{Fractional Laplacian in conformal geometry}, Adv. Math. 226 (2011), no. 2, 1410-1432.

    %\bibitem{Chow} Chow, B. \textit{Aleksandrov reflection for extrinsic geometric flows of Euclidean hypersurfaces}, Adv. Nonlinear Stud. 23 (2023), no. 1, Paper No. 20220034, 22 pp.

    %\bibitem{CL-book} Chen, W., Li, C. \textit{Methods on nonlinear elliptic equations}, AIMS Series on Differential Equations \& Dynamical Systems, American Institute of Mathematical Sciences (AIMS), Springfield, MO, 2010. xii+299 pp.

    %\bibitem{ChenLi2} Chen, W., Li, C. \textit{A necessary and sufficient condition for the Nirenberg problem}, Comm. Pure Appl. Math, 48 (1995), no.6, 657-667.

    \bibitem{CLM_book} Chen, W., Li, C., Ma, P. \textit{The fractional Laplacian}, World Sci. Publ., Hackensack, NJ,  (2020), MR4274583.

    \bibitem{ChenWu-ANS} Chen, W., Wu, L. \textit{Liouville theorems for fractional parabolic equations}, Adv. Nolinear Stud., 21 (2021), 939-958.

    \bibitem{ChenHuMa-ANS} Chen, W., Hu, Y., Ma, L, \textit{Moving planes and sliding methods for fractional elliptic and parabolic equations}, Adv. Nonlinear Stud., 24 (2024), 359-398. 

    \bibitem{CLL} Chen, W., Li, C, Li, Y. \textit{A direct method of moving planes for the fractional Laplacian}, Advances in Mathematics, 308 (2017) 404-437.

    \bibitem{CLO} Chen, W., Li, C.; Ou, B. \textit{Classification of solutions for an integral equation}, Comm. Pure Appl. Math., 59(3) (2006) 330-343.

    \bibitem{ChenLiOu2}  Chen, W.,  Li, C. and Ou, B. \textit{ Classification of solutions for a system of integral equations}, Comm. Partial Differential Equations 30 (2005), no. 1-3, 59-65.

    \bibitem{CLZ} Chen, W., Li, C.; Zhang, R. \textit{A direct method of moving sphere on fractional order equations}, J. Functional. Anal, 272 (2017) 4131-4157.

    \bibitem{Dutta-Sandepp} Dutta, R., Sandeep, K. \textit{Symmetry for a quasilinear elliptic equation in hyperbolic space}. Annali Scuola Normale Superiore, 40 (2025).

    \bibitem{FLY1} Flynn, J.; Lu, G; Yang, Q. \textit{Sharp Hardy-Sobolev-Maz'ya, Adams and Hardy-Adams inequalities on quaternionic hyperbolic spaces and on the Cayley hyperbolic plane}, Rev. Mat. Iberoam. 40 (2024), no. 2, 403-462. 

    \bibitem{FLY2} Flynn, J.; Lu, G; Yang, Q. \textit{Conformally Covariant Boundary Operators and Sharp Higher Order CR Sobolev Trace Inequalities on the Siegel Domain and Complex Ball}, arXiv:2311.09956 [math.AP]

    \bibitem{GuoMaZhang-ANS}	Guo, Y., Ma, L., Zhang, Z. \textit{Sliding methods for dual fractional nonlinear divergence type parabolic equations and the Gibbons' conjecture}, Adv. Nonlinear Stud., 24 (2024), 279-302. 
	  
    \bibitem{GZ03} Graham, C.R., Zworski, M. \textit{Scattering matrix in conformal geometry}. Invent. math. 152, 89-118 (2003). 

    %\bibitem{GOR08} Grunau, H-C., Ahmedou, M. O., Reichel, W. \textit{The {P}aneitz equation in hyperbolic space},Ann. Inst. H. Poincar\'{e} C Anal. Non Lin\'{e}aire, Vol. 25. No. 2. 847--864, (2008).

    \bibitem{GNN1} Gidas, B., Ni W., Nirenberg, L. \textit{Symmetry and related properties via the maximum principle}, Comm. Math. Phys., 68(3) (1979) 209-243.

    \bibitem{GNN2} Gidas, B., Ni W., Nirenberg, L. \textit{Symmetry of positive solutions of nonlinear elliptic equations in $\rn$}, Mathematical analysis and applications, Part A, pp. 369-402, Adv. in Math. Suppl. Stud., (1981).

    \bibitem{GJMS2} Graham, C. R., Jenne, R., Mason, L. J., Sparling, G. A. J. \textit{Conformally Invariant Powers of the Laplacian, I: Existence}, J. London Math. Soc. (2) 46 (1992) no. 3, 557-565.

    \bibitem{HLZ} Han X., Lu G., Zhu J. \textit{Characterization of balls in terms of Bessel-potential integral equation}, J. Differential Equations (2012) 252:1589-1602.

    %\bibitem{Hebey96} Hebey, E. \textit{Sobolev spaces on Riemannian manifolds}, vol: 1635  Lect. Notes Math. Berlin: Springer, 1996.

    \bibitem{Hebey00} Hebey, E. \textit{Nonlinear Analysis on Manifolds: Sobolev Spaces and Inequalities},Courant Lecture Notes, vol: 5; (2000).

    \bibitem{H-F1} Helgason, S. \textit{Groups and geometric analysis}, Mathematical Surveys and Monographs, vol. 83, (1984)

    \bibitem{H-F2} Helgason, S. \textit{Geometric analysis on symmetric spaces}, Mathematical Surveys and Monographs, Volume: 39; (2008).

    %\bibitem{KKMS} Korevaar, N., Kusner, R., Meeks, W., III, Solomon, B. \textit{Constant mean curvature surfaces in hyperbolic space}, Amer. J. Math.114, 1-43 (1992)

    \bibitem{K-P1} Kumaresan, S., Prajapat, J. V. \textit{Analogue of Gidas-Ni-Nirenberg result in hyperbolic space and sphere}, Rend. Istit. Mat. Univ. Trieste, Vol. XXX 107-112 (1998).

    \bibitem{K-P2} Kumaresan, S., Prajapat, J. V. \textit{Serrin's result for hyperbolic space and sphere}, Duke. Math. J.  91 (1998) 17-28.

    \bibitem{Levy}  L\'evy, \textit{Th\'eorie de l\'addition des variables al\'eatoires}, Monographies des probabilit\'es, 1, Gauthier-Villars, Paris, 1937.

    %\bibitem{Li} Li, Y. \textit{Remark on some conformally invariant integral equations: the method of moving spheres}, J. Eur. Math. Soc. 6 (2004), no.2, 153-180.

    \bibitem{Lieb1} Lieb, E. \textit{Sharp constants in the Hardy-Littlewood-Sobolev and related inequalities}, Ann. of Math. (2) 118 (1983), no. 2, 349-374.

    \bibitem{LLW} Li, J., Lu, G.; Wang, J. \textit{Symmetry of solutions of higher and fractional order semilinear equations on hyperbolic spaces}, arXiv:2210.02278 [math.AP].

    \bibitem{LLW2} Li, J., Lu, G., Wang, J. \textit{Potential characterization of geodesic balls on hyperbolic spaces - a moving plane approach}, J Geom Anal 33, 134 (2023).

    \bibitem{LLW25} Li, J., Lu, G., Wang, J. \textit{The method of moving spheres on the hyperbolic space and the classification of solutions and the prescribed Q-curvature problem}, Adv. in Math. 482, 110606 (2025). 

    \bibitem{LLY}  Li, J., Lu, G.; Yang, Q. \textit{Higher order Brezis-Nirenberg problem on hyperbolic spaces: Existence, nonexistence and symmetry of solutions}, Adv. in Math. (2022) 399:108259.

    %\bibitem{LYY} Li Y. \textit{Remark on some conformally invariant integral equations: the method of moving spheres}, J Eur Math Soc (2004)153-180.

    \bibitem{LiY} Li, Y. \textit{A semilinear Dirichlet problem involving the fractional Laplacian in $\mathbb{R}^n$}, Adv. Nonlinear Studies, 24 (2024), 451-462. 

    %\bibitem{Li-Z} Li, Y., Zhu M. \textit{Uniqueness theorems through the method of moving spheres}, Duke. Math. J. (1995) 80:383-417.

    \bibitem{Liao-ANS} Liao, Q., Liu, Z., Wang, X. \textit{Liouville type theorems involving fractional order systems}, Adv. Nonlinear Stud., 24 (2024), 399-414. 

    %\bibitem{Lin98} Lin C-S. \textit{A classification of solutions of a conformally invariant fourth order equation in $\rn$}. Commentarii Mathematici Helvetici (1998) 73:206-231.

    %\bibitem{Liu} Liu, G. \textit{Sharp higher-order Sobolev inequalities in the hyperbolic space $\mathbb{H}^n$}, Calc. Var. Partial Differential Equations, 47(3-4) (2013) 567-588.

    \bibitem{LiuM-ANS} Liu, M., Zhang, L., Wang, G. \textit{Radial symmetry, monotonicity and Liouville theorem for Marchaud fractional parabolic equations with the nonlocal Bellman operator}, Adv. Nonlinear Stud. 25 (2025), no. 3, 902-920. 
    
    \bibitem{LY1} Lu, G., Yang, Q. \textit{Paneitz operators on hyperbolic spaces and high order Hardy-Sobolev-Maz'ya inequalities on half spaces}, American Journal of Mathematics, vol. 141 no. 6, (2019), p. 1777-1816.

    \bibitem{LY2} Lu, G., Yang, Q. \textit{Green's functions of Paneitz and GJMS operators on hyperbolic spaces and sharp Hardy-Sobolev-Maz'ya inequalities on half spaces}, Adv. Math. (2021).
        
    \bibitem{LY3} Lu, G; Yang, Q. \textit{ Sharp Hardy-Sobolev-Maz'ya, Adams and Hardy-Adams inequalities on the Siegel domains and complex hyperbolic spaces}, Adv. Math. 405 (2022), Paper No. 108512, 62 pp.

    \bibitem{LY23} Lu, G., Yang, Q. \textit{Explicit Formulas of Fractional GJMS operators on hyperbolic spaces and sharp fractional Poincaré-Sobolev and Hardy-Sobolev-Maz'ya inequalities}, http://arxiv.org/abs/2310.15973, (2023)

    \bibitem{L-Z2} Lu, G., Zhu, J. \textit{An overdetermined problem in Riesz-potential and fractional Laplacian}, Nonlinear Analysis: Theory, Methods \& Applications (2012) 75:3036-3048.

    \bibitem{L-Z} Lu, G., Zhu, J. \textit{Axial symmetry and regularity of solutions to an integral equation in a half-space}, Pacific J. Math. 253 (2011), no. 2, 455-473.

    \bibitem{ManciniSandeep1} Mancini, G., Sandeep, K. \textit{On a semilinear ellipJtic equation in $\mathbb H^n$}, Ann. Sc. Norm. Super. Pisa Cl. Sci. (5), 7(4):635--671, 2008.

    %\bibitem{Padilla} Padilla, P. \textit{Symmetry properties of positive solutions of elliptic equations on symmetric domains}, Appl. Anal. 64 (1997), no. 1-2, 153-169.

    %\bibitem{Robert} Robert, F., \textit{Positive solutions for a fourth order equation invariant under isometries}, Proc. Amer. Math. Soc. 131 (2003), 1423-1431.

    \bibitem{serrin} Serrin, J. \textit{A symmetry problem in potential theory}, Arch. Rat. Mech. 43 (1971) 304-318.

    %\bibitem{Stapelkamp1} Stapelkamp, S. \textit{The Br\'ezis-Nirenberg problem on $\mathbb H^n$. Existence and uniqueness of solutions}, Elliptic and parabolic problems ({R}olduc/{G}aeta, 2001), pages 283--290. World Sci. Publ., River Edge, NJ, (2002).

    %\bibitem{Talenti1} Talenti, G. \textit{Best constant in {S}obolev inequality}, Ann. Mat. Pura Appl. (4), 110:353--372, (1976).

    %\bibitem{WX99} Wei, J., Xu, X. \textit{Classification of solutions of higher order conformally invariant equations}, Math. Ann, no. 2, vol. 313, 207--228, (1999).

    \bibitem{Yosida} Yosida, K. \textit{Functional analysis}, Classics in Mathematics, Springer-Verlag, Berlin, 1995.
  \end{thebibliography}
\end{document}